\theoremstyle{plain}
\newtheorem{thm}{Theorem}[section]
\newtheorem{cor}[thm]{Corollary}
\newtheorem{lem}[thm]{Lemma}
\newtheorem{rem}[thm]{Remark}
\newtheorem{ques}[thm]{Question}
\newtheorem{conj}[thm]{Conjecture}
\newtheorem{prob}[thm]{Problem}
\theoremstyle{definition}
\newtheorem*{acknowledgement}{Acknowledgement}
\def\cal{\mathcal}
\def\bbb{\mathbb}
\def\op{\operatorname}
\renewcommand{\phi}{\varphi}
\newcommand{\N}{\bbb{N}}
\newcommand{\Z}{\bbb{Z}}
\newcommand{\Q}{\bbb{Q}}
\let\@@pmod\pmod
\DeclareRobustCommand{\pmod}{\@ifstar\@pmods\@@pmod}
\def\@pmods#1{\mkern4mu({\operator@font mod}\mkern 6mu#1)}
\begin{document}

\title[Equal values of partition functions via Diophantine equations]{Equal values of certain partition functions via Diophantine equations}
\author{Szabolcs Tengely}
\address{Szabolcs Tengely\newline
\indent Mathematical Institute\newline
 \indent University of Debrecen\newline
 \indent P.O.Box 12\newline
 \indent 4010 Debrecen\newline
 \indent Hungary}
\email{tengely@science.unideb.hu}

\author{Maciej Ulas}
\address{
	Maciej Ulas\newline
	\indent Jagiellonian University\newline
	\indent Faculty of Mathematics and Computer Science \newline
	\indent Institute of Mathematics\newline
	\indent \L{}ojasiewicza 6\newline
	\indent 30-348 Krak\'ow\newline
	\indent Poland}
\email{Maciej.Ulas@im.uj.edu.pl}

\keywords{partitions, Diophantine equation, polynomial, identities}
\subjclass[2000]{11D41, 11P83}
\thanks{
The research of the first author was supported in part by the NKFIH grants 115479, 128088 and130909 and by the project EFOP-3.6.1-16-2016-00022, co-financed by the European Union and the European Social Fund.  Research of the second author was supported by a grant of the National Science Centre (NCN), Poland, no. UMO-2019/34/E/ST1/00094}

\begin{abstract}
Let $A\subset \N_{+}$ and by $P_{A}(n)$ denotes the number of partitions of an integer $n$ into parts from the set $A$. The aim of this paper is to prove several result concerning the existence of integer solutions of Diophantine equations of the form $P_{A}(x)=P_{B}(y)$, where $A, B$ are certain finite sets.
\end{abstract}

\maketitle

\section{Introduction}\label{sec1}

Let $\N$ be the set of non-negative integers, $\N_{+}$ the set of positive integers and for $k\in\N_{+}$ we write $\N_{\geq k}$ for the set of integers $\geq k$.

Let $A\subset\N_{+}$ be given and take $n\in\N$. By an $A$-partition $\lambda=(\lambda_{1},\ldots, \lambda_{k})$, of a non-negative integer $n$ with parts in $A$, we mean representation of $n$ in the form
$$
n=\lambda_{1}+\ldots+\lambda_{k},
$$
where $\lambda_{i}\in A$. The representations of $n$ differing only in order of the terms are counted as a one. We also put
$$
\op{Part}_{A}(n)=\{\lambda:\;\lambda\;\mbox{is}\;A\mbox{-partition of}\;n\},
$$
and consider the corresponding partition function
$$
P_{A}(n):=\#\op{Part}_{A}(n).
$$

It is well know that the ordinary generating function of the sequence $(P_{A}(n))_{n\in\N}$ takes the form
$$
\prod_{a\in A}\frac{1}{1-x^{a}}=\sum_{n=0}^{\infty}P_{A}(n)x^{n}.
$$
In particular, if $A=\N_{+}$, then $P_{A}(n)$, simply denoted as $p(n)$, is the famous partition function introduced by L. Euler and extensively studied by S. Ramanujan. The function $p(n)$ counts the number of partitions with parts in $\N_{+}$, i.e., unrestricted partitions of $n$. Many questions concerning arithmetic properties of $p(n)$ is still unsolved. Probably, the most famous one, is the question whether the sequence $\{p(n)\mod{m}\}_{n\in\N}$ is equidistributed modulo $m$, i.e., whether, for any given $m\in\N_{\geq 2}$ and each $r\in\{0,\ldots, m-1\}$, there is an equality
$$
\limsup_{N\rightarrow  +\infty}\frac{\#\{n\leq N:\;p(n)\equiv r\pmod*{m}\}}{N}=\frac{1}{m}.
$$
For discussion on this topic see \cite{AO}.

The literature concerning arithmetic properties of functions counting various partitions is enormous. One of the central problems in partition theory is to obtain partition identities \cite{And1}. More precisely, if $\cal{W}$ is a certain property which can be applied to the parts of a given $A$-partition $\lambda$ of a positive integer $n$, then by $P_{A}(\cal{W},n)$ we denote the number of $A$-partitions of $n$ which have the property $\cal{W}$. Thus, by a partition identity we mean an identity of the form
$$
P_{A_{1}}(\cal{W}_{1},n)=P_{A_{2}}(\cal{W}_{2},n),
$$
where $A_{1}, A_{2}\subset \N_{+}$ and $\cal{W}_{1}, \cal{W}_{2}$ are given properties. The basic identity of the kind is Euler's famous identity 
$$
\prod_{n=1}^{\infty}(1+x^{n})=\prod_{n=1}^{\infty}\frac{1}{1-x^{2n-1}},
$$
which gives the equality between the number of partitions into distinct parts and the number of partitions into odd parts. Equivalently
$$
P_{\N_{+}}(\mbox{distinct parts},n)=P_{\N_{+}}(\mbox{odd parts},n).
$$
Euler's identity can be proved by a simple manipulation of infinite products. Much deeper partition identities are two (of many known) Rogers-Ramanujan type identities. These identities can be deduced from the equalities of the type: infinite product $=$ infinite series, i.e.,
\begin{align*}
\prod_{n=1}^{\infty}\frac{1}{(1-x^{5n-1})(1-x^{5n-4})}&=\sum_{n=1}^{\infty}\frac{x^{n^{2}}}{(1-x)(1-x^{2})\cdot\ldots\cdot (1-x^{n})},\\
\prod_{n=1}^{\infty}\frac{1}{(1-x^{5n-2})(1-x^{5n-3})}&=\sum_{n=1}^{\infty}\frac{x^{n^{2}+n}}{(1-x)(1-x^{2})\cdot\ldots\cdot (1-x^{n})}.
\end{align*}
The first identity implies that the number of partitions of $n$ such that the adjacent parts differ by at least 2 is the same as the number of partitions of $n$ such that each part is congruent to either 1 or 4 modulo 5, i.e.,
$$
P_{\N_{+}}(\mbox{adjacent parts differ by at least 2},n)=P_{\N_{+}}(\mbox{parts}\equiv 1, 4\pmod*{5},n).
$$ 
The second one implies that the number of partitions of $n$ such that the adjacent parts differ by at least 2 and such that the smallest part is at least 2 is the same as the number of partitions of $n$ such that each part is congruent to either 2 or 3 modulo 5. Mentioned identities are only special cases of more general one which can be found in the literature. However, according to our best knowledge nothing is known about identities of the form
\begin{equation}\label{eq1}
P_{A_{1}}(\cal{W}_{1},n)=P_{A_{2}}(\cal{W}_{2},m)\neq 0,
\end{equation}
where $n, m$ are {\it different} positive integers.

Of course, in the above, we are interested in non-trivial identities. More precisely, if the set of values of $P_{A_{i}}(\cal{W}_{i},n)$ contains all positive integers for some $i\in\{1,2\}$ then the problem is trivial. For example, if $A_{1}=\{2^{j}:\;j\in\N\}$ and the property $\cal{W}_{1}$ says that at least two parts in $A_{1}$-partition of $n$ are equal, then, for $s(n):=P_{A_{1}}(\cal{W}_{1},n)$ we clearly have
$$
\sum_{n=0}^{\infty}s(n)x^{n}=\prod_{n=0}^{\infty}(1+x^{2^{n}}+x^{2^{n+1}}),
$$
and the sequence $\{s(n)\}_{n\in\N}$ is a famous Stern sequence satisfying the recurrence relations
$$
s(0)=1,\quad s(2n)=s(n)+s(n-1),\quad s(2n+1)=s(n).
$$
One can easily prove that $s(2^{n})=n+1$ and thus the sequence $(s(n))_{n\in\N}$ contains all positive integers. Thus the question concerning the existence of positive integer solutions of related equation is trivial.

Of great interests would be the proof of non-existence of $A_{i}, \cal{W}_{i}, i=1, 2$ such that the corresponding partition functions $P_{A_{i}}(\cal{W}_{i},n), i=1, 2,$ have exponential growth and  equation (\ref{eq1}) has infinitely many solutions in positive integers. Note that if $P_{A_{i}}(\cal{W}_{i},n)$ has an exponential growth then the set $A_{i}$ is necessarily infinite.

A related question is to whether given partition function takes values in a given infinite set. Especially, in the set of values of a given polynomial. In other words we are interested in the solvability in non-negative integers of the equation
\begin{equation}\label{eq2}
f(m)=P_{A}(\cal{W},n)\neq 0,
\end{equation}
where $f\in\Q[x]$ is of positive degree and positive leading coefficient. Again we are interested in non-trivial situations only. Here if $f$ is linear, then we enter in the realm of partition congruences. The Ramanujan congruences
$$
p(5n+4)\equiv 0\pmod*{5},\quad p(7n+5)\equiv 0\pmod*{7},\quad p(11n+7)\equiv 0\pmod*{11}
$$
and its various generalizations give some non-trivial examples when equation (\ref{eq2}) has infinitely many solutions in positive integers. Indeed, each equation
$$
p(n)=5m, \quad p(n)=7m,\quad p(n)=11m
$$
has infinitely many solutions in positive integers. In fact, one can prove that for each prime $q\neq 3$ the Diophantine equation $p(n)=qm$ has infinitely many solutions in positive integers. Indeed, Nicolas, Ruzsa and S\'{a}rk\"{o}zy \cite{NRS} proved this for $q=2$, and Ono proved that such a statement is true for each $q\geq 5$ \cite{Ono}.

In case of equation (\ref{eq2}) more can be said provided we know some arithmetic properties of the partition function $P_{A}(\cal{W},n)$. For example, if $A=\{2^{i}:\;i\in\N\}$ then so called binary partition function $b(n):=P_{A}(n)$ counting the partitions with all parts being powers of $2$, satisfies the recurrence relation
$$
b(0)=1,\quad b(2n)=b(2n-1)+b(n),\quad b(2n+1)=b(2n).
$$
A classical result of Churchhouse states that $\nu_{2}(b(n))\in\{1,2\}$ for $n\in\N_{\geq 2}$ \cite{Chu}. Thus, if
$$
\{1,2\}\cap\{\nu_{2}(f(m)):\;m\in\N_{+}\}=\emptyset
$$
then equation (\ref{eq2}) has at least $\op{deg} f$ solutions possibly coming from the integer solutions of the equation $f(m)=1$.

Our discussion above shows that the problem of solvability of equation (\ref{eq1}) or equation (\ref{eq2}) is interesting, difficult and worth of further investigations. It is clear that our questions are on the intersection of discrete mathematics, combinatorics and Diophantine equations. This suggest to start investigations with the case of $A$ finite. This is reasonable due to the fact that $p_{A}(n)$ is a quasi polynomial (see for example \cite{RS}). More precisely, if $A=(a_{1},\ldots, a_{k})$ and $L_{A}:=\op{lcm}(a_{1}, \ldots, a_{k})$, then
$$
p_{A}(L_{A}n+i)\in\Q[n]\quad\mbox{for}\quad i=0, 1, \ldots, L_{A}-1.
$$
Thus, if $A_{1}, A_{2}$ are finite, then solvability in positive integers of equation (\ref{eq1}) or a more general equation (\ref{eq2}), is equivalent with the solvability in non-negative integers, of at least one equation of the type
$$
p_{A_{1}}(L_{A_{1}}m+i)=p_{A_{2}}(L_{A_{2}}n+j),
$$
where $i\in\{0, 1, \ldots, L_{A_{1}}-1\}, j\in\{0, 1, \ldots, L_{A_{2}}-1\}$, or
$$
f(m)=p_{A_{1}}(L_{A_{1}}n+i),\quad i\in\{0, 1, \ldots, L_{A_{1}}-1\},
$$
respectively. Thus, we enter into realm of polynomial Diophantine equations with separable variables where great deal of methods are for our disposal.

In the rest of the paper, in case of the set $A=\{1,\ldots, m\}$ instead of writing $P_{A}(x)$ and $\op{Part}_{A}(n)$, we will simply write $P_{m}(x)$ and $\op{Part}_{m}(n)$, respectively. Let us describe the content of the paper in some details.

In Section \ref{sec2} we prove that for each $f\in\Z[x]$ with positive degree and positive leading coefficient and $A=\{a_{1}, a_{2}\}\subset \N_{+}$ with $\gcd(a_{1},a_{2})=1$, the Diophantine equation $P_{A}(x)=f(y)$ has infinitely many solutions in positive integers (Theorem \ref{a1a2}).

Section \ref{sec3} is mainly devoted to the study of the equation $P_{3}(x)=P_{n}(y)$ for $n=4, 5$. In particular, we describe all positive integer solutions in both cases and present some related results (Theorem \ref{P3P4} and Theorem \ref{P3P5}).

In Section \ref{sec4} we study some equation involving $P_{A}(x)$, where $A=\{1,2,a\}, a\in\N_{\geq 3}$. In particular, we obtain a general result concerning the existence of infinitely many positive integer solutions of the equation $P_{A}(x)=P_{4}(y)$ (Theorem \ref{PA4}). We also obtain, under weak assumptions on $a, b\in\N_{\geq 3}, a\neq b$, that for $A=\{1, 2, a\}, B=\{1, 2, b\}$, the Diophantine equation $P_{A}(x)=P_{B}(y)$ has infinitely many solutions in positive integers (Theorem \ref{12a12b}).

In Section \ref{sec5} we obtain several results concerning the square values of $P_{A}(x)$. In particular we describe the set of positive integer solutions of the equation $y^2=P_{n}(x)$ for $n=3, 4, 5$. We also discuss results of some computations. Finally, in the last section we collect some general questions and conjectures concerning various aspects of the Diophantine equations of the form $P_{A}(x)=P_{B}(y)$ and report results of various computations.

\section{The case of $A=\{a_{1},a_{2}\}$}\label{sec2}

In this short section we prove a general result concerning the existence of positive integer solutions of the Diophantine equation
$$
P_{A}(x)=f(y),
$$
where $A=\{a_{1}, a_{2}\}, a_{1}<a_{2}$ and $f\in\Z[y]$ is a non-constant polynomial with positive leading coefficient. More precisely, based on the formula obtained by Sert\"{o}z in \cite{Se} we easily prove the following.

\begin{thm}\label{a1a2}
Let $A=\{a_{1}, a_{2}\}\subset\N_{+}$. Then, for each $f\in\Z[x]$ with positive leading coefficient, the Diophantine equation $P_{A}(x)=f(y)$ has infinitely many solutions in positive integers.
\end{thm}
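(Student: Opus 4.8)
The plan is to use the (quasi-)explicit formula for $P_{\{a_1,a_2\}}$ recalled from \cite{Se} to exhibit an arithmetic progression of arguments along which $P_A$ is \emph{exactly} linear in the step variable, and then to pull back the values of $f$ through this progression.

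First I would reduce to the coprime case. Writing $d=\gcd(a_1,a_2)$ and $a_i'=a_i/d$, division of every part by $d$ is a bijection between the $A$-partitions of $n$ and the $\{a_1',a_2'\}$-partitions of $n/d$ when $d\mid n$, while $P_{\{a_1,a_2\}}(n)=0$ when $d\nmid n$; hence $P_{\{a_1,a_2\}}(n)=P_{\{a_1',a_2'\}}(n/d)$ for $d\mid n$. It therefore suffices to understand $P_{\{a_1',a_2'\}}$ for $\gcd(a_1',a_2')=1$, and here Sert\"{o}z's formula (equivalently, the classical Popoviciu formula) reads
\[
P_{\{a_1',a_2'\}}(m)=\frac{m}{a_1'a_2'}-\left\{\frac{b_2 m}{a_1'}\right\}-\left\{\frac{b_1 m}{a_2'}\right\}+1,
\]
where $\{\cdot\}$ denotes the fractional part and $b_1a_1'\equiv 1\pmod{a_2'}$, $b_2a_2'\equiv 1\pmod{a_1'}$. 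The two fractional-part terms are periodic in $m$ with period dividing $a_1'a_2'$, so for each fixed residue $r$ the quantity $P_{\{a_1',a_2'\}}(a_1'a_2'k+r)$ equals $k$ plus a constant depending only on $r$. Taking $r=0$ and using $P_{\{a_1',a_2'\}}(0)=1$ yields $P_{\{a_1',a_2'\}}(a_1'a_2'k)=k+1$ for all $k\ge 0$, hence, with $\ell:=\op{lcm}(a_1,a_2)=a_1a_2/d$,
\[
P_{A}(\ell\,k)=k+1\qquad(k\ge 0).
\]
(One could also verify this last identity directly by counting the non-negative solutions of $a_1u+a_2v=\ell k$, thereby bypassing \cite{Se} altogether.)

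Now the construction is immediate. Let $f\in\Z[x]$ have positive leading coefficient; as in the discussion preceding the statement we take $\deg f\ge 1$, so $f(y)\to+\infty$. Pick $y_0$ with $f(y)\ge 2$ for every integer $y\ge y_0$. For such $y$ the integer $k:=f(y)-1$ is positive, so
\[
x:=\ell\,(f(y)-1)\in\N_{+}\aand P_A(x)=P_A(\ell k)=k+1=f(y).
\]
Thus $\bigl(\ell(f(y)-1),\,y\bigr)$ is a positive integer solution of $P_A(x)=f(y)$ for every integer $y\ge y_0$, and these solutions are pairwise distinct since their $y$-coordinates are; hence the equation has infinitely many solutions in positive integers.

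As for difficulty, there is essentially no obstacle: the whole content lies in the reduction to $\gcd(a_1,a_2)=1$ and in reading off from Sert\"{o}z's formula that the progression $n=\ell k$ has ``slope'' exactly $1$; once $P_A(\ell k)=k+1$ is in hand the conclusion is automatic. It is worth noting that the hypothesis $\deg f\ge 1$ (implicit in ``positive leading coefficient'' together with the running convention of the paper) is genuinely needed: a nonzero constant $f$ could be matched only finitely often, since $P_A$ is unbounded yet assumes each of its values on a bounded set of arguments.
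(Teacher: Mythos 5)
Your proposal is correct and follows essentially the same route as the paper: both rest on the Sert\"{o}z--Popoviciu closed form for $P_{\{a_1,a_2\}}$, both read off that $P_A$ equals $k+1$ along the progression of multiples of $a_1a_2$ (resp.\ $\op{lcm}(a_1,a_2)$), and both conclude by setting $k=f(y)-1$. Your explicit justification of the reduction to the coprime case is a small improvement over the paper's bare ``without loss of generality,'' but the argument is the same.
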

\begin{proof}
Without loss of generality we can assume that $\gcd(a_{1},a_{2})=1$. Then, by Sert\"{o}z result \cite{Se}, we know that the following formula holds
$$
P_{A}(n)=\frac{n+a_{1}\cdot a_{1}'+a_{2}\cdot a_{2}'}{a_{1}a_{2}}-1,
$$
where $a_{i}\cdot a_{i}'\equiv-n\pmod*{a_{i+1}}$ and $1\leq a_{i}'(n)\leq a_{i+1}$ for $i=1, 2$ and if $i=2$, then $i+1$ is taken modulo 2. In particular, for each $n\in\N_{+}$
$$
P_{A}(n)=\left\lfloor\frac{n}{a_{1}a_{2}}\right\rfloor\quad\mbox{or}\quad P_{A}(n)=\left\lfloor\frac{n}{a_{1}a_{2}}\right\rfloor +1.
$$
Thus, let us take $n=a_{1}a_{2}(f(m)-1)$, with $m\in\N_{+}$ chosen in such a way that $f(m)>1$. We thus consider the congruence
$$
a_{i}\cdot a_{i}'\equiv-n\equiv -a_{1}a_{2}(f(m)-1)\equiv 0\pmod*{a_{i+1}}
$$
and due to co-primality condition and the bound for $a_{1}', a_{2}'$, we get that $a_{i}'=a_{i+1}$, i.e., we have that $a_{1}'=a_{2}, a_{2}'=a_{1}$. In consequence
$$
P_{A}(a_{1}a_{2}(f(m)-1))=\frac{a_{1}a_{2}(f(m)-1)+a_{1}a_{2}+a_{2}a_{1}}{a_{1}a_{2}}-1=f(m)
$$
and our theorem is proved.
\end{proof}

\section{The equation $P_{3}(x)=P_{n}(y)$ for $n=4, 5$}\label{sec3}

In this section we are interesting in the characterization of positive integer solutions of the Diophantine equation
$$
P_{3}(x)=P_{n}(y)
$$
for $n=4, 5$.

\begin{thm}\label{P3P4}
The Diophantine equation $P_{3}(x)=P_{4}(y)$ has infinitely many solutions in integers.
\end{thm}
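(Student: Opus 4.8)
The plan is to turn both sides into genuine polynomials by restricting $x$ and $y$ to arithmetic progressions, and then to produce an explicit polynomial family of solutions. Since $P_{3}$ is a quadratic quasi-polynomial and $P_{4}$ a cubic one, a parametrized solution must have $x$ of degree $3$ and $y$ of degree $2$ in the new variable.

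First I would record the explicit formulas. For $P_{3}$ one has the classical identity $P_{3}(x)=\op{round}\!\big((x+3)^{2}/12\big)$; equivalently, on each residue class of $x$ modulo $6$ there is a constant $\eps=\eps(x\bmod 6)\in\{-4,-1,0,3\}$ with $12P_{3}(x)=(x+3)^{2}+\eps$, and conversely, for any integer $m\geq 3$, the value $\big(m^{2}+\eps(m)\big)/12$ is attained by $P_{3}$ at $m-3$, where $\eps(m)$ depends only on $m\bmod 6$. For $P_{4}$, the partial fraction decomposition of $\prod_{a=1}^{4}(1-x^{a})^{-1}$ (a pole of order $4$ at $1$, order $2$ at $-1$, and simple poles at the primitive cube and fourth roots of unity) shows that on each residue class of $y$ modulo $12$, $P_{4}(y)$ equals an explicit cubic polynomial with leading term $y^{3}/144$; the identity $(y+1)(y+2)(y+3)=(y+2)^{3}-(y+2)$ is handy here.

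Comparing leading terms, $12P_{3}(x)\sim(x+3)^{2}$ while $12P_{4}(y)\sim y^{3}/12$, and $12(x+3)^{2}=y^{3}$ is an identity when $x+3=12s^{3}$, $y=12s^{2}$. This suggests searching for $y(t)\in\Z[t]$ of degree $2$ with leading coefficient $12$ and $h(t):=x(t)+3\in\Z[t]$ of degree $3$ with leading coefficient $12$, and a constant $\eps\in\{-4,-1,0,3\}$, such that for $t$ in a suitable residue class modulo $12$ (ensuring $y(t)\bmod 12$ and $h(t)\bmod 6$ are constant) one has the polynomial identity
$$
12\,P_{4}(y(t))-\eps=h(t)^{2}.
$$
Granting this, $12P_{3}(x(t))=h(t)^{2}+\eps=12P_{4}(y(t))$, so $(x(t),y(t))$ solves $P_{3}(x)=P_{4}(y)$ for all $t$ in the progression, yielding infinitely many solutions (indeed positive ones, once $t$ is large). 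Note that reducing the displayed identity modulo $12$ forces $\eps\equiv -h(t)^{2}\pmod{12}$, which (as $\eps\in\{-4,-1,0,3\}$) pins $\eps$ down to be exactly the offset the $P_{3}$-formula attaches to $h(t)$; hence there is no hidden compatibility obstruction. Matching the coefficients of $t^{0},\dots,t^{6}$ in the identity reduces everything to a finite system of equations in the lower coefficients of $y(t)$ and $h(t)$, which I would then solve explicitly.

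The main obstacle is that this coefficient system is slightly over-determined, so that its solvability is not a matter of dimension count but depends on the precise form of the lower-order correction terms in the quasi-polynomial of $P_{4}$; locating the right residue class modulo $12$, the right value of $\eps$, and the right parametrizing polynomials is the substantive part. If the normalization with leading coefficient $12$ fails, I would rescale (leading coefficients $12c^{2}$ for $y(t)$ and $12c^{3}$ for $h(t)$, for a suitable constant $c$) and/or allow $y(t),h(t)$ to be quasi-polynomials in $t$; alternatively, one may bypass the polynomial identity via a Pell equation: if for some residue class and some $\eps$ the cubic $y\mapsto 12P_{4}(y)-\eps$ has a repeated root, say $A(y-r)^{2}(y-s)$, then $y-s=Aw^{2}$ already parametrizes infinitely many solutions, whereas if it only factors as $A(y-r)^{2}q(y)$ with $q$ an indefinite quadratic, then $w^{2}=q(y)$ is a Pell-type equation with infinitely many integer solutions once it has one. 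In all cases the conclusion is immediate once the correct parametrization is found.
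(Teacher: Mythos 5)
Your strategy is in substance the paper's own: restrict $x$ and $y$ to residue classes so that both sides become honest polynomials (quadratic for $P_3$, cubic for $P_4$), and observe that a polynomial family of solutions can only come from a residue class where the cubic $12P_4(y)-\eps$ degenerates to a square times a linear factor. The fallback you describe at the end --- ``if the cubic factors as $A(y-r)^2(y-s)$, set $y-s=Aw^2$'' --- is exactly the mechanism the paper exploits. The one genuine gap is that you never exhibit such a residue class, and its existence is not forced by your set-up (as you yourself note, the coefficient system is over-determined and its solvability hinges on the lower-order terms of the quasi-polynomial); as written the argument is conditional on a computation you have not performed. That computation does succeed, and with precisely your normalization: one has $P_3(6m+3)=3(m+1)^2$ and $P_4(6n+3)=\tfrac32(n+1)^2(n+2)$, so the equation on this pair of classes reads $2(m+1)^2=(n+1)^2(n+2)$; putting $n+2=2t^2$ gives $m+1=t(2t^2-1)$, i.e.\ the family $(x,y)=(12t^3-6t-3,\;12t^2-9)$, $t\in\N_{+}$. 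In your notation this is $h(t)=12t^3-6t$, $y(t)=12t^2-9$, $\eps=0$, and indeed $12P_4(12t^2-9)=36t^2(2t^2-1)^2=h(t)^2$ identically. With this instance supplied, your proof is complete.

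Two further remarks. First, your compatibility check (that the identity $12P_4(y(t))-\eps=h(t)^2$ read modulo $12$ automatically selects the correct offset $\eps$ for the class of $h(t)$ modulo $6$, since $\{0,-1,-4,3\}$ are exactly the values of $-m^2 \bmod 12$) is correct and is a nice way to see there is no hidden obstruction. Second, the paper's proof does substantially more than the theorem asks: it runs through all $54$ class-pairs, and in the non-degenerate cases invokes Siegel's theorem and {\sc Magma}'s integral-point routines to determine \emph{every} integer solution, not merely infinitely many. For the statement as given, that machinery is unnecessary, and your argument (once the explicit family above is inserted) is the elementary core of it.
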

\begin{proof}
First of all we recall that
\begin{align*}
P_{3}(n)&=\left\lfloor\frac{(n+3)^{2}}{12}\right\rceil,\\
P_{4}(n)&=\left\lfloor\frac{(n+1)(n^2+23n+85)}{144}-\frac{n+4}{8}\left\lfloor\frac{n+1}{2}\right\rfloor\right\rceil,
\end{align*}
where $\lfloor x\rceil$ denotes the nearest integer to $x$. For concise proofs of these two identities see \cite[p. 57-60]{AE}. An alternative recent proof of these and related equalities were obtained by Castillo et al. \cite{Cas}.

From the form of $P_{3}(n)$ we see that for $i\in\{0,\ldots, 5\}$ the expression $P_{3}(6n+i)$ is a polynomial in $n$. More precisely, we define $P_{i,6,3}(n)=P_{3}(6n+i)$ and observe that
\begin{equation*}
\begin{array}{ll}
  P_{0,6,3}(n)=3n^2+3n+1,      & P_{1,6,3}(n)=(n+1)(3n+1),  \\
  P_{2,6,3}(n)=(n+1)(3n+2),    & P_{3,6,3}(n)=3(n+1)^2,\\
  P_{4,6,3}(n)=(n+1)(3n+4),    & P_{5,6,3}(n)=(n+1)(3n+5).\\
\end{array}
\end{equation*}
We can similar treat the case of $P_{4}(n)$ and by defining $P_{2i+1,6,4}(n)=P_{4}(6n+2i+1)$ for $i=0, 1, 2$ and $P_{2i,12,4}(n)=P_{4}(12n+2i)$ for $i=0, 1, \ldots, 5$, we get
\begin{equation*}
\begin{array}{ll}
  P_{1,6,4}(n)=\frac{1}{2} (n+1) \left(3 n^2+6 n+2\right),     & P_{3,6,4}(n)=\frac{3}{2} (n+1)^2 (n+2), \\
  P_{5,6,4}(n)=\frac{3}{2} (n+1) (n+2)^2, &  \\
  P_{0,12,4}(n)=12n^3+15n^2+6n+1,       & P_{2,12,4}(n)=12n^3+21n^2+12n+2, \\
  P_{4,12,4}(n)=(n+1)(12n^2+15n+5),     & P_{6,12,4}(n)=3(n+1)^2(4n+3), \\
  P_{8,12,4}(n)=3(n+1)^2(4n+5),         & P_{10,12,4}(n)=(n+1)(12n^2+33n+23).
\end{array}
\end{equation*}
In order to characterize all integers solutions of the Diophantine equation $P_{3}(x)=P_{4}(y)$ we need to perform case by case analysis. More precisely, we consider all possible combinations of the equations
\begin{equation*}\label{type12}
{\rm (I)}\quad P_{i,6,3}(x)=P_{2j+1,6,4}(y)\quad\quad\mbox{or}\quad\quad {\rm(II)}\quad P_{i,6,3}(x)=P_{2j,12,4}(y),
\end{equation*}
i.e., we deal with 54 equations. In each case we deal in the same way. Because the degree of $P_{i,6,3}$ is 2, each equation of interests can be reduced to the equation of the type $Y^2=f(X)$ for some $f\in\Z[X]$ and the degree of $f$ is 3. If $f$ has no multiple roots, by classical Siegel result, we know that the curve defined by the equation $Y^2=f(X)$ has only finitely many integral points. On the other hand, if $f$ has multiple roots then there is a chance that our equation has infinitely many integral solutions which can be parameterized via polynomials.

In order to see what is going on, let us consider the equation \begin{equation}\label{063064}
P_{1,6,3}(x)=P_{3,6,4}(y),\;\mbox{i.e.},\; (x+1)(3x+1)=\frac{3}{2}(y+1)^2(y+2)
\end{equation}
or equivalently $Y^2=X^3-108X+1728$, where we put $X=6(3y+4), Y=36(3x+2)$. Our equation represents an elliptic curve, say $E$, in the plane $(X,Y)$ and standard methods allow to find that the curve $E$ has trivial torsion and the rank of $E$ is equal to 2, with the generators $(X,Y)=(6, -36), (-2, 44)$. Using internal {\sc Magma} \cite{Mag} procedures

\quad {\tt E:=EllipticCurve([-108,1728]);}

\quad {\tt IntegralPoints(E);}\\
(the background for this latter routine is found in \cite{GPZell} and \cite{StTz}; see also \cite{Tz1996})
we find that the point $(X,Y)\in E(\Q)$ has integer coordinates, if and only if
\begin{align*}
(X,Y)\in\{ &(-12, \pm 36), (-3, \pm 45), (-2, \pm 44), (6, \pm 36), (16, \pm 64),  \\
           &(22, \pm 100), (78, \pm 684), (96, \pm 936), (7926, \pm 705636)\}.
\end{align*}
Direct check shows that only the points $P_{1}=(96, -936)$ and $P_{2}=(7926, -705636)$ correspond to the solutions of our equation. We thus get that the only integer solutions of equation (\ref{063064}) are $(x,y)=(8, 4), (6533, 439)$. We thus get the equalities
\begin{align*}
P_{1,6,3}(8)&=P_{3,6,4}(4)=225,\\
P_{1,6,3}(6533)&=P_{3,6,4}(439)=128066400.
\end{align*}

Case by case analysis reveals that the solutions exist only in the following cases:
\begin{align*}
(i,j)&=(0, 0), (1, 0), (1, 1), (1, 2), (5, 1)\quad\mbox{in the case (I)}\quad \mbox{and}\\
(i,j)&=(0,0), (1,0), (2,1), (3,4), (5,2)\quad\mbox{in the case (II)}.
\end{align*}

In the table below we give all integral solutions in these cases.
\begin{equation*}
\begin{array}{|l|l|}
\hline
\hline
  (i,j) & \mbox{integral solutions}\;(x,y)\;\mbox{of}\; P_{i,6,3}(x)=P_{2j+1,6,4}(y)  \\
\hline
\hline
  (0, 0) &  (0, 0)\\
  (0, 1) &  (0, 0) \\
  (1, 1) &  (8, 4), (6533, 439)\\
  (1, 2) &  (293, 54) \\
  (3, 1) &  ((t-1)(2 t^2+2 t+1), 2(t-1)(t+1)), t\in\N_{+} \\
  (3, 2) &  (2 t^3+t-1, 2 t^2-1), t\in\N_{+} \\
  (5, 1) &  (5, 3)\\
\hline
\hline
  (i,j) & \mbox{integral solutions}\;(x,y)\;\mbox{of}\; P_{i,6,3}(x)=P_{2j,12,4}(y) \\
\hline
\hline
  (0, 0) & ((t-1)(2 t^2-t+1), 2 (t-1) t), t\in\N_{+}\\
  (1, 0) &  (0, 0) \\
  (2, 1) &  (0, 0) \\
  (3, 4) & (2 t^3+3 t^2+t-1, t^2+t-1), t\in\N_{+}\\
  (5, 2) & (0, 0) \\
\hline
\end{array}
\end{equation*}
\begin{center} Table. Values of $(i,j)$ such that the corresponding equations of types (I), (II) have solutions in non-negative integers. \end{center}

Having the form of solutions presented in the table above, we can easily back to our original Diophantine equation $P_{3}(X)=P_{4}(Y)$ and found that solutions take the form $(X,Y)=(6x+i, 6y+2j)$ in the first type equation, and $(X,Y)=(6x+i, 12y+2j+1)$ in the second type equation.
\end{proof}

In our next theorem we characterize the set of positive integer solutions of the Diophantine equation $P_{3}(x)=P_{5}(y)$.

\begin{thm}\label{P3P5}
The equation $P_{3}(x)=P_{5}(y)$ has only finitely many solutions in positive integers. More precisely, the pair $(x,y)$ is a solution if and only if $(x,y)\in \cal{A}$, where
\begin{align*}
\cal{A}=\{&(1, 1), (2, 2), (3, 3), (5, 4), (6, 5), (8, 6), (16, 10), (18, 11), (26, 14), \\
          &(45, 20), (174, 45), (217, 51), (457, 77), (468, 78), (701, 97), (10093, 388)\}.
\end{align*}
\end{thm}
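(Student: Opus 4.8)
\noindent\emph{Proof strategy.}
The plan is to follow the strategy of the proof of Theorem~\ref{P3P4}, pushing it one degree higher. Since $L_{\{1,2,3\}}=6$ and $L_{\{1,2,3,4,5\}}=\op{lcm}(1,2,3,4,5)=60$, for every $i\in\{0,\dots,5\}$ the function $u\mapsto P_{i,6,3}(u):=P_{3}(6u+i)$ is a quadratic polynomial with integer coefficients and leading coefficient $3$ (these are exactly the polynomials written out above), and for every $j\in\{0,\dots,59\}$ the function $v\mapsto P_{j,60,5}(v):=P_{5}(60v+j)$ is a polynomial of degree $4$ in $v$ with positive leading coefficient. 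Hence $P_{3}(x)=P_{5}(y)$ has a solution in positive integers precisely when at least one of the $6\cdot 60=360$ equations $P_{i,6,3}(u)=P_{j,60,5}(v)$ has a solution in non-negative integers with $6u+i\ge 1$ and $60v+j\ge 1$. Before doing any serious work I would run a congruence sieve: for several small moduli $N$ (for instance $N=8,9,5,7,16,\dots$) I would compare the set of residues modulo $N$ attained by $P_{i,6,3}$ with that attained by $P_{j,60,5}$; for the vast majority of the pairs $(i,j)$ these sets are disjoint, so only a short list of pairs survives for a detailed analysis.

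For each surviving pair I would complete the square on the left: multiplying the equation by a suitable integer constant and using $12P_{i,6,3}(u)=(6u+b_{i})^{2}+d_{i}$ with $b_{i},d_{i}\in\Z$, together with the (bounded) common denominator of the coefficients of $P_{j,60,5}$, reduces the problem to
\[
W^{2}=g_{i,j}(v),\qquad g_{i,j}\in\Z[v],\quad \op{deg}g_{i,j}=4,
\]
where $W$ is an explicit affine-linear function of $u$. If $g_{i,j}$ is squarefree, then the curve $W^{2}=g_{i,j}(v)$ has genus $1$, so by Siegel's theorem it carries only finitely many integral points; either it has no rational point at all (and then no integral solutions), or one passes to a Weierstrass model, computes the Mordell--Weil rank and a basis of the free part, and applies the method of elliptic logarithms, as implemented in the integral-points routines of {\sc Magma} \cite{Mag} whose theoretical underpinning is \cite{GPZell}, \cite{StTz} and \cite{Tz1996}. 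Translating the integral points of each such curve back through $W=W(u)$ and $(x,y)=(6u+i,60v+j)$ produces the finitely many candidate solutions.

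The crucial extra point, absent in the $P_{3}(x)=P_{4}(y)$ situation, is to exclude infinite families. Should some $g_{i,j}$ fail to be squarefree, the curve degenerates to genus $0$: a rational double root $\alpha$ of $g_{i,j}$ lets one write $W=(v-\alpha)V$ and reduces to a conic $V^{2}=q(v)$ with $q$ quadratic, while a repeated irreducible quadratic factor forces $g_{i,j}=e(v^{2}+pv+q)^{2}$. In every such case one must check by hand that no infinite family of solutions of $P_{3}(x)=P_{5}(y)$ survives: concretely, that the conic $V^{2}=q(v)$ is an ellipse (hence has finitely many integer points), or that the associated Pell-type equation has no integer solutions, or that its solutions are incompatible with the congruence constraint coming from $W$ being of the form $6u+b_{i}$; and similarly in the perfect-square case. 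This is exactly the mechanism that produced the polynomial families in the table in the proof of Theorem~\ref{P3P4}, so its nonoccurrence (or harmlessness) here is what ultimately forces finiteness. Having collected all integral points over all pairs $(i,j)$, discarding those with $x=0$ or $y=0$ (these come only from $P_{3}(0)=P_{5}(0)=1$), and verifying each of the remaining pairs directly against the definitions of $P_{3}$ and $P_{5}$, one obtains precisely the set $\cal{A}$.

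The main obstacle is computational rather than conceptual. First, the congruence sieve must be pushed far enough that the residual list of pairs $(i,j)$ is genuinely manageable. Second, and more seriously, the completeness of each integral-point computation hinges on determining the Mordell--Weil rank and generators of the relevant elliptic curve, and for the curves of large conductor arising from some residue classes this may demand an explicit descent (via a $2$- or $3$-isogeny) or a saturation argument to certify that the computed points generate the full group. Finally, the genus-$0$ bookkeeping described above, though elementary, must be carried out exhaustively, since a single overlooked Pell family would contradict the statement.
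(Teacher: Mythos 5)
Your strategy is essentially the paper's: reduce to the $6\cdot 60=360$ equations $P_{i,6,3}(u)=P_{j,60,5}(v)$, each of the shape $W^{2}=g_{i,j}(v)$ with $g_{i,j}$ quartic, and determine all integral points via the {\sc Magma} routine \texttt{IntegralQuarticPoints()} based on \cite{Tz1996}. The only divergence is in where the difficulty actually lies: no genus-$0$ degenerations occur here, but the routine fails on $8$ squarefree quartics (all with $i=3$), which the paper resolves by hand via a descent --- writing the equation as $v^{2}=u\cdot(\text{cubic in }u)$, deducing that $u$ is a square times a divisor of the constant term, and solving the resulting finite family of elliptic curves with \texttt{IntegralPoints()} --- which is exactly the kind of auxiliary argument you anticipate in your final paragraph.
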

\begin{proof}
Direct check reveals that $P_{5}(60n+i), i\in\{0,\ldots, 59\}$ is a polynomial in variable $n$. Thus, one can perform the same analysis as in the case of the equation $P_{3}(x)=P_{4}(y)$. However, here the situation is a bit more complicated because, after necessary simplifications, we need to work with the equations of the type $Y^2=f(X)$, where $f$ is a polynomial of degree 4. We need to consider $6\cdot 60=360$ equations in order to get the result. Here we may apply the {\sc Magma} procedure \texttt{IntegralQuarticPoints()} based on the paper \cite{Tz1996}.
It worked well in all except the 8 cases, where the {\sc Magma} function failed to determine the complete set of integral solutions. These 8 problematic equations are of the form
$$
P_{3}(6y+i)=P_{5}(60x+j)
$$
for
$$
(i,j)\in\cal{A}=\{(3,9), (3, 12), (3, 21), (3, 24), (3, 33), (3, 36), (3, 48), (3, 57)\}.
$$

The equations corresponding to $(i, j)=(3, 48), (3, 57)$ are of the following form
\begin{eqnarray*}
Y^2 &=& u(54000u^3 - 16200u^2 + 1410u - 18),\\
Y^2 &=& u(54000u^3 + 16200u^2 + 1410u + 18),
\end{eqnarray*}
respectively, where $u=x+1.$ In both cases we obtain that $u$ is a square multiplied by a divisor of 18. Therefore we need to handle the equations
$$
(2\delta^2 v)^2=(60\delta u)^3 - 18\delta(60\delta u)^2 + 94\delta^2(60\delta u) - 72\delta^3,
$$
where $\delta\in\{\pm 1, \pm 2, \pm 3, \pm 6, \pm 9, \pm 18\}.$ One more time we use the MAGMA procedure \texttt{IntegralPoints()} to determine the integral points on these elliptic curves. We only need to consider points having first coordinate divisible by $60\delta.$ It turns out that $u=0$ is the only solution, that is $(x,Y)=(-1,0).$

In the remaining 6 cases,  we observed that the discriminant of $P_{3}(6y+i)=P_{5}(60x+j)$ with respect to $y$ is equal to
$$
F(u)=432u^4 + 648u^3 + 282u^2 + 18u,
$$
for suitable substitution of the form $u=ax+b$ (depending on values of $i, j$). The expression for $u$ are given below
$$
\begin{array}{|l|l|l|l|l|l|l|}
\hline
  (i,j) & (3, 9) & (3, 12) & (3, 21) & (3, 24) & (3, 33) & (3,36) \\
\hline
      u & 5x+1 & -5x-2 & 5x+2 & -5x-3 & 5x+3 & -5x-4\\
\hline
\end{array}
$$

Therefore we only need to determine integral points on the curve
$$
72u^4 + 108u^3 + 47u^2 + 3u=30v^2.
$$
We obtain that 3 divides $u,$ so $u=3u_1$ for some integer $u_1.$ We have that
$$
u_1(648u_1^3+324u_1^2+47u_1+1)=30v_1^2,\mbox{ where }v=3v_1.
$$
The factorization yields the following elliptic curves
$$
X^3+324\delta X^2+30456\delta^2X+419904\delta^3=Y^2, \mbox{ where }\delta\in\{1,2,3,5,6,10,15,30\}.
$$
We determined the integral points on these curves and checked if $X$ is divisible by $648\delta,$ the only such solution corresponds to $X=0.$ Hence we do not obtain integral solution in case of these six curves.
\end{proof}

In the light of the result one can ask for which sequences $A$ of the form $A=\{1, 2, 3, a\}, a\geq 4$, the Diophantine equation $P_{3}(x)=P_{A}(y)$ has infinitely many solutions
in positive integers. It is not difficult to find many values of $a$ with this property. Indeed, for a fixed $a$ the equation has the form that a quadratic polynomial is equal to a cubic polynomial, hence we may expect a genus 1 curve. However, we for certain values of $a$ we may obtain infinitely many integral solutions. The strategy we follow is simple, we determine polynomials $P_A(6an+k)$ in $n$ that are not square-free and then deal with the equation $P_{3}(6m+3)=3(m+1)^2=P_A(6an+k)$. This works for $a\in\{4, 6, 12, 14, 20\}$. In these cases the equation $P_{3}(x)=P_{A}(y)$ has a polynomial solution and hence infinitely many solutions in positive integers.

We close this section with the following

\begin{conj}
There are infinitely many values of $a\in\N_{\geq 4}$ such that for $A=\{1, 2, 3, a\}$, the Diophantine equation $P_{3}(x)=P_{A}(y)$ has infinitely many solutions in positive integers.
\end{conj}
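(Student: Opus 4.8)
We do not prove this conjecture; instead we sketch a plan that would reduce it to an explicit question about discriminants, systematizing the method used above for $a\in\{4,6,12,14,20\}$. Put $A=\{1,2,3,a\}$ and $L_A=\op{lcm}(6,a)$. Splitting off the last factor of the generating function,
\[
\frac{1}{(1-x)(1-x^2)(1-x^3)(1-x^a)}=\Big(\sum_{N\ge 0}P_3(N)x^N\Big)\cdot\frac{1}{1-x^a},
\]
yields the finite sum $P_A(N)=\sum_{j\ge 0}P_3(N-ja)$; combined with quasi-polynomiality, this shows that for every residue $k\in\{0,\dots,L_A-1\}$ the map $n\mapsto Q_{a,k}(n):=P_A(L_A n+k)$ is a cubic in $\Q[n]$, with leading coefficient $L_A^3/(36a)$ independent of $k$. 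Since $P_3(6u+3)=3(u+1)^2$, restricting $x$ to $x\equiv 3\pmod{6}$ turns $P_3(x)=P_A(y)$ with $y\equiv k\pmod{L_A}$ into $3(u+1)^2=Q_{a,k}(n)$. Hence it suffices to produce infinitely many $a$ for which $Q_{a,k}$ assumes values equal to three times a square infinitely often, for some $k=k(a)$.

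The second step is the standard analysis of the curve $3v^2=Q_{a,k}(n)$. If $Q_{a,k}$ is separable this is a genus $1$ curve, so by Siegel's theorem it carries only finitely many integral points and that choice of $k$ fails. If $Q_{a,k}$ has a repeated root $\rho$ — necessarily rational, being a root of $\gcd(Q_{a,k},Q_{a,k}')$ — write $Q_{a,k}(n)=c(n-\rho)^2(n-\sigma)$ with $c,\sigma\in\Q$; the curve then has genus $0$, and the substitution $v=(n-\rho)w$ reduces the equation to the linear relation $3w^2=c(n-\sigma)$, which has infinitely many integral solutions along a suitable arithmetic progression provided a mild congruence condition holds — as it does in every case with $a\le 20$, and as one expects generically. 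Modulo these routine checks (together with the positivity of $x,y$ and membership in the prescribed residue classes), the conjecture is implied by the single assertion: \emph{there are infinitely many $a\in\N_{\geq 4}$ for which $\op{disc}_n Q_{a,k(a)}=0$ for some $k(a)$.}

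To attack this assertion, one would let $k$ depend affinely on $a$, say $k=\alpha a+\beta$ with $\alpha,\beta$ fixed; reindexing gives $P_A(L_A n+k)=\sum_{i=0}^{(L_A/a)n+\alpha}P_3(ai+\beta)$, and inserting the closed form for the partial sums of the period-$(L_A/a)$ quasi-polynomial $i\mapsto P_3(ai+\beta)$ displays the four coefficients of $Q_{a,k}$, and hence $\op{disc}_n Q_{a,k}$, as explicit polynomials in $a$. The known solutions $a\in\{4,6,12,14,20\}$ should be used to guess the right affine relation $k=k(a)$ and a conjectural arithmetic progression of admissible $a$, and one would then try to verify a polynomial identity $P_3(f(t))=P_A(g(t))$ valid uniformly along that progression. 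The main obstacle is precisely here: the discriminant of a cubic in $n$ is, generically, a polynomial in $a$ with only finitely many roots, so success requires a genuine structural reason — presumably a factorization forced by the recursion $P_A(N)=P_A(N-a)+P_3(N)$ — that makes a whole family of values of $a$ work at once. Uncovering that structure, rather than producing yet more sporadic examples, is what currently keeps the statement a conjecture.
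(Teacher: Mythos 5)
The statement you were asked about is presented in the paper as a conjecture, and the paper gives no proof of it --- only the preceding remark that the strategy of locating residues $k$ for which the cubic $P_{A}(6an+k)$ is not square-free and matching it against $P_{3}(6m+3)=3(m+1)^2$ succeeds for $a\in\{4,6,12,14,20\}$. Your proposal correctly declines to claim a proof, and the plan you sketch (quasi-polynomial cubic $Q_{a,k}$, Siegel's theorem killing the separable case, a repeated root giving a genus-zero parametrization, and the residual open problem of exhibiting infinitely many $a$ with $\op{Disc}_{n}Q_{a,k(a)}=0$) is essentially the authors' own approach; the obstacle you isolate --- a structural reason forcing the discriminant to vanish along an infinite family of $a$, rather than sporadic examples --- is exactly what is missing in the paper as well, which is why the statement remains a conjecture there.
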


\section{Some properties of $P_{A}(x)$ for $A=\{1,2,a\}, a\geq 3$, and related equations}\label{sec4}

In this section we obtain explicit expression for $P_{A}(n)$ in case of $A=\{1, 2, a\}$. As an application we deduce several results concerning Diophantine properties of $P_{A}(n)$.

\begin{thm}\label{12aexpres}
Let $a\in\N_{\geq 3}$ and put $A=\{1,2,a\}$. If $a=2c$ for some $c\in\N_{\geq 2}$ then
\begin{equation*}
P_{A}(4cn+i)=2cn^2+\left(c+2\left\lfloor\frac{i}{2}\right\rfloor+2\right)n+\begin{cases}\begin{array}{ll}
                                                                                    \left\lfloor\frac{i+2}{2}\right\rfloor, & i\in\{0, \ldots, 2c-1\}  \\
                                                                                    2\left\lfloor\frac{i}{2}\right\rfloor +2-a,& i\in\{2c, \ldots, 4c-1\}
                                                                                  \end{array}
\end{cases}.
\end{equation*}

If $a=2c+1$ for some $c\in\N_{+}$ then
\begin{align*}
P_{A}(2(2c+1)n&+i)=(2c+1)n^2+(c+i+2)n\\
              &+\begin{cases}\begin{array}{ll}
                                                                                    \left\lfloor\frac{i+2}{2}\right\rfloor, & i\in\{0, \ldots, 2c\}  \\
                                                                                    i+1-a,& i\in\{2c+1, \ldots, 4c+1\}
                                                                                  \end{array}
\end{cases}.
\end{align*}
\end{thm}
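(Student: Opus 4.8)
The plan is to count $A$-partitions directly rather than to manipulate the generating function $\prod_{a\in A}(1-x^{a})^{-1}$. Since $A=\{1,2,a\}$, an $A$-partition of $N$ is the same thing as a triple $(\alpha,\beta,\gamma)\in\N^{3}$ with $\alpha+2\beta+a\gamma=N$, and for each pair $(\beta,\gamma)$ with $2\beta+a\gamma\le N$ the value $\alpha=N-2\beta-a\gamma$ is forced. Hence
$$
P_{A}(N)=\#\{(\beta,\gamma)\in\N^{2}:\ 2\beta+a\gamma\le N\}=\sum_{\gamma=0}^{\lfloor N/a\rfloor}\left(\left\lfloor\frac{N-a\gamma}{2}\right\rfloor+1\right).
$$
(The same identity also follows from $P_{A}(N)-P_{A}(N-a)=P_{\{1,2\}}(N)=\lfloor N/2\rfloor+1$ together with $P_{A}(r)=P_{\{1,2\}}(r)$ for $0\le r<a$, summed along the progression $r,r+a,r+2a,\dots$.) Everything now reduces to evaluating this sum in closed form, and the two halves of the theorem are exactly the cases $a$ even and $a$ odd.

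For $a=2c$ the quantity $a\gamma$ is even, so $\lfloor (N-a\gamma)/2\rfloor=\lfloor N/2\rfloor-c\gamma$ and, writing $M:=\lfloor N/a\rfloor$, the sum collapses to the arithmetic-series closed form
$$
P_{A}(N)=(M+1)\left(\lfloor N/2\rfloor+1\right)-c\,\frac{M(M+1)}{2}.
$$
Next I would put $N=4cn+i$ with $0\le i\le 4c-1$, use $\lfloor N/2\rfloor=2cn+\lfloor i/2\rfloor$, and split according to whether $i\in\{0,\dots,2c-1\}$ (whence $M=2n$) or $i\in\{2c,\dots,4c-1\}$ (whence $M=2n+1$). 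Substituting each value of $M$ into the closed form and expanding as a polynomial in $n$ produces a quadratic with the coefficients recorded in the statement; the two ranges of $i$ yield precisely the two branches of the case distinction, the difference stemming only from the jump of $M=\lfloor N/a\rfloor$.

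For $a=2c+1$ the approach is the same, the one complication being that the parity of $N-a\gamma$ now alternates with $\gamma$, since $N-a\gamma$ and $N-\gamma$ have the same parity. Writing $\lfloor (N-a\gamma)/2\rfloor=\tfrac12(N-a\gamma-\varepsilon_{\gamma})$ with $\varepsilon_{\gamma}\in\{0,1\}$ equal to $1$ exactly when $\gamma$ and $N$ have opposite parities, one obtains
$$
P_{A}(N)=(M+1)+\frac12\left((M+1)N-a\cdot\frac{M(M+1)}{2}-\sum_{\gamma=0}^{M}\varepsilon_{\gamma}\right),
$$
so the only new ingredient is the elementary count $\sum_{\gamma=0}^{M}\varepsilon_{\gamma}=\#\{\gamma:0\le\gamma\le M,\ \gamma\not\equiv N\bmod 2\}$, which equals $(M+1)/2$ when $M$ is odd and $M/2$ or $M/2+1$ (according to the parity of $N$) when $M$ is even. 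Putting $N=2(2c+1)n+i$ with $0\le i\le 4c+1$ — so that $M=2n$ for $i\in\{0,\dots,2c\}$ and $M=2n+1$ for $i\in\{2c+1,\dots,4c+1\}$ — and simplifying once more yields the stated polynomial in $n$ on each branch.

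Each individual branch is a routine expansion, so there is no real conceptual obstacle; the work is organizational. The delicate points are keeping track of the floor and parity contributions (the $\lfloor i/2\rfloor$ terms in the even case, the count of the $\varepsilon_{\gamma}$ in the odd case) and correctly matching the two possible values of $M=\lfloor N/a\rfloor$ to the right sub-intervals of residues $i$ — it is exactly this jump in $M$ that is responsible for the case split in both formulas. As a safeguard against off-by-one errors I would check the resulting expressions against the values $P_{\{1,2,a\}}(N)$ for small $N$ (say $N\le 2a$) computed directly from the definition.
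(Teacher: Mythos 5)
Your strategy is sound and is, at its core, the same as the paper's: the paper derives the recurrence $P_{A}(n)=P_{A}(n-a)+\lfloor n/2\rfloor+1$ (with $P_{A}(n)=\lfloor n/2\rfloor+1$ for $n<a$) from $(1-x^{a})F_{A}(x)=\frac{1}{(1-x)(1-x^{2})}$ and then checks the closed form by induction, omitting the details, whereas you telescope that same recurrence into the explicit sum $\sum_{\gamma=0}^{\lfloor N/a\rfloor}\left(\lfloor (N-a\gamma)/2\rfloor+1\right)$ and evaluate it. Your version is a genuine derivation rather than a verification of a guessed answer, and your bookkeeping of $M=\lfloor N/a\rfloor$ and of the parity corrections $\varepsilon_{\gamma}$ is exactly what is needed.

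One concrete warning, which the sanity check you propose at the end would catch: when you actually expand the second branch, the constant term does \emph{not} come out as printed in the statement. For $a=2c$ and $i\in\{2c,\dots,4c-1\}$ (so $M=2n+1$) your closed form gives
$$
(2n+2)\left(2cn+\left\lfloor\tfrac{i}{2}\right\rfloor+1\right)-c\,\tfrac{(2n+1)(2n+2)}{2}
=2cn^{2}+\left(c+2\left\lfloor\tfrac{i}{2}\right\rfloor+2\right)n+2\left\lfloor\tfrac{i}{2}\right\rfloor+2-c,
$$
i.e.\ the constant is $2\lfloor i/2\rfloor+2-a/2$, not $2\lfloor i/2\rfloor+2-a$; similarly, in the odd case the second-branch constant works out to $i+1-c$ rather than $i+1-a$. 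Spot checks confirm your version: $P_{\{1,2,4\}}(4)=4$ while the printed branch gives $2$, and $P_{\{1,2,3\}}(3)=3=P_{3,6,3}(0)$ in agreement with the table of $P_{3}(6n+i)$ in Section 3, while the printed branch gives $1$. So the printed second-branch constants are misprints ($-a$ should be $-c$); your method yields the correct ones, and everything else in your plan (the $n$-coefficients and the first branches, where $\lfloor i/2\rfloor+1=\lfloor(i+2)/2\rfloor$) matches the statement exactly. Just don't assert, as you currently do, that the expansion reproduces ``the coefficients recorded in the statement'' without performing it.
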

\begin{proof}
Let $A=\{1,2,a\}$ and recall that
$$
\frac{1}{(1-x)(1-x^2)(1-x^a)}=\sum_{n=0}^{\infty}P_{A}(n)x^{n}, \quad \frac{1}{(1-x)(1-x^2)}=\sum_{n=0}^{\infty}\left(\left\lfloor\frac{n}{2}\right\rfloor+1\right)x^{n}.
$$
Thus, using the identity $(1-x^{a})F_{A}(x)=\frac{1}{(1-x)(1-x^2)}$ by comparison of coefficients of like powers on both sides of we get that $P_{A}(n)$ satisfies the following recurrence relation:
$$
P_{A}(n)=\left\lfloor\frac{n}{2}\right\rfloor+1, n\leq a-1\quad\mbox{and}\quad P_{A}(n)=P_{A}(n-a)+\left\lfloor\frac{n}{2}\right\rfloor+1\;\mbox{for} \;n\geq a.
$$
Knowing that $P_{A}(n)$ satisfies recurrence relation of the presented form and using the (conjectural) form of the solution it is easy to perform the rest of the proof by induction. We omit the tiresome details.
\end{proof}

There are many papers devoted to the explicit computation of the function $P_{A}(n)$ for given $A=\{a_{1},\ldots, a_{k}\}$ under various conditions on $a_{1}, \ldots, a_{k}$. Although the result above can also be deduced from known results (see for example \cite{Ehr, SerOz}), the explicit form with exact values of coefficients is very useful in what follows.

Having the explicit form of the $P_{A}(2an+i), i\in\{0,\ldots, 2a-1\}, A=\{1,2,a\}$ one can obtain certain results concerning polynomial values taken by the partition function $P_{A}(n), n\in\N_{+}$. We start with the following simple

\begin{cor}
Let $a\in\N_{\geq 3}$ and put $A=\{1, 2, a\}$.
\begin{enumerate}
\item If $a\equiv 0\pmod*{2}$, then $P_{A}(2n)=P_{A}(2n+1)$ for each $n\in\N$.
\item If $a\equiv 1\pmod*{2}$, then
$$
P_{A}(n)=P_{A}(n+1)\;\Longleftrightarrow\; n=2j, j\in\left\{1,\ldots,\frac{a-3}{2}\right\}.
$$
\end{enumerate}
\end{cor}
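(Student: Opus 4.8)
The plan is to read off both assertions from the recurrence for $P_{A}$ derived in the proof of Theorem \ref{12aexpres}, namely
$$
P_{A}(n)=\left\lfloor\frac{n}{2}\right\rfloor+1\ \ (n\le a-1),\qquad P_{A}(n)=P_{A}(n-a)+\left\lfloor\frac{n}{2}\right\rfloor+1\ \ (n\ge a);
$$
one could instead substitute the closed forms of that theorem and compare like terms, but the recurrence keeps the bookkeeping shortest.

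For part (1), let $a=2c$. I would argue via generating functions: multiplying $\sum_{n\ge 0}P_{A}(n)x^{n}=\frac{1}{(1-x)(1-x^{2})(1-x^{a})}$ by $1-x$ gives $\frac{1}{(1-x^{2})(1-x^{a})}$, in which every monomial $x^{2i+aj}=x^{2(i+cj)}$ has even exponent. Hence the coefficient of $x^{2m+1}$ in $(1-x)\sum_{n}P_{A}(n)x^{n}$, which equals $P_{A}(2m+1)-P_{A}(2m)$, vanishes for every $m\in\N$. (An immediate induction on $m$ from the recurrence works too: for $2m\ge a$ one has $\lfloor(2m+1)/2\rfloor=\lfloor 2m/2\rfloor$, so $P_{A}(2m+1)-P_{A}(2m)=P_{A}(2m+1-a)-P_{A}(2m-a)$, a smaller instance; for $2m\le a-1$ it is immediate.)

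For part (2), write $a=2c+1$ and set $d(n):=P_{A}(n+1)-P_{A}(n)$; the task is to locate all $n$ with $d(n)=0$, and I would split according to the two branches of the recurrence. If $n\le a-2$ then $n,n+1\le a-1$, so $d(n)=\lfloor(n+1)/2\rfloor-\lfloor n/2\rfloor$, which vanishes precisely when $n$ is even; inside $\{0,\dots,2c-1\}$ these are the values $n=2j$ with $0\le j\le c-1$. For $n=a-1=2c$ the recurrence gives $P_{A}(2c)=c+1$ and $P_{A}(2c+1)=P_{A}(a)=P_{A}(0)+c+1=c+2$, so $d(2c)=1\neq 0$. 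For $n\ge a$, subtracting the recurrence at $n+1$ and at $n$ yields $d(n)=d(n-a)+\varepsilon(n)$, where $\varepsilon(n):=\lfloor(n+1)/2\rfloor-\lfloor n/2\rfloor\in\{0,1\}$ equals $n\bmod 2$; telescoping down to the base segment (write $n=qa+r$ with $0\le r\le a-1$ and $q\ge 1$) gives
$$
d(qa+r)=d(r)+\sum_{j=1}^{q}\varepsilon(ja+r).
$$
Here $d(r)\ge 0$ for every $r\in\{0,\dots,2c\}$ (all such values lie in $\{0,1\}$ by the previous two cases), and since $a$ is odd one has $\varepsilon(ja+r)=(j+r)\bmod 2$; thus if $r$ is odd or $r=2c$ then $d(r)=1$, while if $r$ is even and $r<2c$ then $\varepsilon(a+r)=1$. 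In every case $d(qa+r)\ge 1>0$. Combining the three cases, $d(n)=0$ holds exactly for $n=2j$ with $j\in\{0,1,\dots,c-1\}$; discarding the trivial solution $n=0$ (where $P_{A}(0)=P_{A}(1)=1$) and using $c-1=\frac{a-3}{2}$ gives the stated equivalence.

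The one step demanding a little care is the exclusion of the "large" solutions, i.e.\ the proof that $d(n)>0$ once $n\ge a$. This is exactly what the telescoped identity above provides, the essential point being that $a$ odd forces the parities of $ja+r$ to alternate with $j$, so the sum $\sum_{j=1}^{q}\varepsilon(ja+r)$ cannot vanish whenever $d(r)=0$; one must also treat $n=a-1$ by hand, since $a-1$ and $a$ fall into different branches of the recurrence. Everything else reduces to the elementary facts about $\lfloor\,\cdot/2\,\rfloor$ already used above.
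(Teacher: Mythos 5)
Your proof is correct, but it follows a different route from the paper. The paper derives both parts by substituting into the explicit quasi-polynomial formulas of Theorem \ref{12aexpres}: for (1) it observes that the closed form of $P_{A}(4cm+i)$ depends on $i$ only through $\left\lfloor i/2\right\rfloor$ (plus the constant term, which also matches for $i=2i'$ versus $2i'+1$), and for (2) it equates the relevant quadratics in $m$ and reads off $m=0$, $n=2i$. You instead work directly from the recurrence $P_{A}(n)=P_{A}(n-a)+\lfloor n/2\rfloor+1$ (and, for part (1), the generating-function identity $(1-x)\sum P_{A}(n)x^{n}=\frac{1}{(1-x^{2})(1-x^{a})}$, whose right-hand side is even in $x$ when $a$ is even). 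Your part (2) is the more delicate one, and the telescoped identity $d(qa+r)=d(r)+\sum_{j=1}^{q}\varepsilon(ja+r)$ together with the parity observation $\varepsilon(ja+r)\equiv j+r\pmod*{2}$ correctly rules out all solutions with $n\geq a$; the boundary case $n=a-1$ is also handled. The trade-off: the paper's argument is shorter once Theorem \ref{12aexpres} is available, while yours is self-contained (it never needs the closed forms, only the recurrence) and makes the exclusion of large $n$ completely transparent. One small point you handle more carefully than the statement itself: $n=0$ does satisfy $P_{A}(0)=P_{A}(1)=1$, so strictly the solution set is $n=2j$, $j\in\{0,\ldots,\frac{a-3}{2}\}$; the paper's own proof likewise produces $i=0,\ldots,c-1$ including $i=0$, so your explicit remark that $n=0$ is being discarded is consistent with (indeed slightly more honest than) the printed corollary.
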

\begin{proof}
\noindent (1) Let us put $a=2c, c\in\N_{\geq 2}$. The statement is an immediate consequence of the first formula from Theorem \ref{12aexpres}. Indeed, for each $i\in\{0,\ldots, 2c-1\}$ and $m\in\N$ we have the equality
$$
P_{A}(4c m+2i)=P_{A}(4c m+2i+1).
$$
Because of the equality $\{4cn+2i:\;n\in\N,\;i\in\{0,\ldots, 2c-1\}\}=2\N$ (the set of even non-negative integers) we get the result.

\noindent (2) Let us put $a=2c+1, c\in\N_{+}$. Because $\{2(2c+1)n+i:\;n\in\N,\;i\in\{0,\ldots, 4c+1\}\}=\N$ to get the solutions of $P_{A}(n)=P_{A}(n+1)$ it is enough to consider the solutions (in $m\in\N$) of equations $P_{A}(2(2c+1)m+2i)=P_{A}(2(2c+1)m+2i+1)$ or $P_{A}(2(2c+1)m+2i+1)=P_{A}(2(2c+1)m+2i+2)$. We consider the former equation first. If $i<c$ then we deal with the equation
$$
(2c+1)m^2+(c+2i+2)m+i+1=(2c+1)m^2+(c+2i+3)m+i+1,
$$
i.e., $m=0$ and $n=2i$ for $i=0,\ldots, c-1=\frac{a-3}{2}$. If $i=c$ then we work with the equation
$$
(2c+1)m^2+(c+2i+2)m+i+1=(2c+1)m^2+(c+2i+3)m+2i+2-c,
$$
i.e., $m=-1$ and we do not get any new solution.

The same analysis can be applied to the equation $P_{A}(2(2c+1)m+2i+1)=P_{A}(2(2c+1)m+2i+2)$ and we easily get that it has no solutions in $\N$. We omit the simple details.
\end{proof}

\begin{thm}\label{PA4}
Let $a\in\N_{\geq 3}$ and put $A=\{1,2, a\}$.
\begin{enumerate}
\item If $a\not\equiv 2\pmod*{4}$ then the Diophantine equation $P_{A}(m)=P_{4}(n)$ has infinitely many solutions in positive integers.
\item If $a\equiv 2\pmod*{4}$ then the Diophantine equation $P_{A}(m)=P_{4}(n)$ has only finitely many solutions in integers.
\end{enumerate}
\end{thm}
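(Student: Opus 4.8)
The plan is to exploit the quasi-polynomial structure of both sides and reduce the equation to a finite list of curves of the form $Y^{2}=g(v)$ with $g$ a cubic. By Theorem \ref{12aexpres}, on each residue class modulo $2a$ the function $P_{A}$ agrees with a quadratic polynomial $q_{i}(u)=au^{2}+\beta_{i}u+\gamma_{i}$; as recalled in Section \ref{sec3}, on each residue class modulo $12$ the function $P_{4}$ agrees with a cubic polynomial $c_{j}(v)$, and among these nine pieces exactly four have a repeated root, namely
$$
P_{4}(6v+3)=\tfrac{3}{2}(v+1)^{2}(v+2),\qquad P_{4}(6v+5)=\tfrac{3}{2}(v+1)(v+2)^{2},
$$
$$
P_{4}(12v+6)=3(v+1)^{2}(4v+3),\qquad P_{4}(12v+8)=3(v+1)^{2}(4v+5).
$$
Completing the square, $4a\,q_{i}(u)=(2au+\beta_{i})^{2}-K_{i}$ with $K_{i}:=\beta_{i}^{2}-4a\gamma_{i}$, so in each pair of residue classes the equation $P_{A}(m)=P_{4}(n)$ becomes
$$
Y^{2}=g_{i,j}(v):=4a\,c_{j}(v)+K_{i},\qquad Y=2au+\beta_{i}.
$$
Everything then hinges on (a) the values taken by $K_{i}$ and (b) for which $(i,j)$ the cubic $g_{i,j}$ has a repeated root.

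A direct computation with the formulas of Theorem \ref{12aexpres} shows that $K_{i}=0$ is attained for a suitable residue class of $m$ if and only if $a\not\equiv 2\pmod{4}$: if $a=2c+1$ this happens for $m\equiv 3c\pmod{2a}$, where $P_{A}(2an+3c)=a(n+1)^{2}$, while if $a=2c$ with $c$ even it happens for $m\equiv c-2\pmod{2a}$, where $P_{A}(4cn+c-2)=\tfrac{c}{2}(2n+1)^{2}=\tfrac{a}{4}(2n+1)^{2}$. This proves part (1). Fixing the corresponding class of $m$ we have $P_{A}(m)=\kappa z^{2}$ with $\kappa\in\{a,\,a/4\}$ and $z$ an integral linear form in $n$; we match it against $P_{4}(6v+3)=\tfrac{3}{2}(v+1)^{2}(v+2)$. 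Writing $v=2v'-2$ (so $n\equiv 3\pmod{12}$) the equation turns into $3v'(v+1)^{2}=\kappa z^{2}$, and choosing $v'$ of the shape $(\kappa/3)s^{2}$ or $3\kappa s^{2}$ according to whether $3\mid\kappa$ makes the right-hand side $\kappa$ times a perfect square. One then reads off $v$ as a quadratic and $z$ (hence $m$ and $n$) as polynomials in the free parameter $s$, taking $s$ odd in the even case so that $z=2n+1$ is odd. For $s$ large these are positive integers, so $P_{A}(m)=P_{4}(n)$ has infinitely many solutions in positive integers.

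For part (2) we must instead show that \emph{no} $g_{i,j}$ degenerates. Let $a=2c$ with $c$ odd. Theorem \ref{12aexpres} then gives $K_{i}=(c-2p)^{2}$ or $K_{i}=(3c-2-2\ell)^{2}$ for suitable $p$ and $\ell$, so $K_{i}$ is always a \emph{positive odd} perfect square. On the other hand, the cubic $g_{i,j}(v)=4a\,c_{j}(v)+K_{i}$ has a repeated root only if $K_{i}=-4a\,c_{j}(v_{0})$ for some $v_{0}$ with $c_{j}'(v_{0})=0$; running through the nine pieces $c_{j}$ of $P_{4}$ one finds that every such $v_{0}$ is rational and that the critical value $c_{j}(v_{0})$ is $0$ or a fraction with denominator dividing $36$. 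Hence each candidate $-4a\,c_{j}(v_{0})=-8c\,c_{j}(v_{0})$ is either $\leq 0$, or not an integer, or --- when it is a positive integer --- even (since $c$ is odd and the denominator divides $36$). As $K_{i}$ is a positive odd integer, none of these can equal $K_{i}$, so every $g_{i,j}$ is a squarefree cubic and each curve $Y^{2}=g_{i,j}(v)$ has genus $1$. By Siegel's theorem it has only finitely many integral points; summing over the finitely many classes $(i,j)$ shows that $P_{A}(m)=P_{4}(n)$ has only finitely many solutions in integers.

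The bulk of the work --- and the main obstacle --- is the bookkeeping in part (2): for each of the nine quasi-polynomial pieces $c_{j}$ of $P_{4}$, and for each of its (at most two) critical points, one must verify that $-4a\,c_{j}(v_{0})$ fails to be one of the odd-square values assumed by $K_{i}$. This is precisely the point where the $2$-adic difference between $c$ odd and $c$ even becomes decisive. A minor technical annoyance in part (1) is the divisibility by $3$ in the parametrization, which necessitates the small case split according to whether $3\mid\kappa$.
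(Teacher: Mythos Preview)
Your argument is correct and, while built on the same reduction as the paper (use the quasi-polynomial pieces to turn $P_{A}(m)=P_{4}(n)$ into finitely many curves $Y^{2}=g_{i,j}(v)$ with $g_{i,j}$ cubic, then decide when these degenerate), the execution is genuinely different in both parts.

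For part (1) the paper treats $a\equiv 0,1,3\pmod 4$ separately, choosing a different pair $(i,j)$ in each case and verifying that the inner discriminant $G_{i,j,a}(n)$ has a double root. You instead isolate the single $P_{4}$--piece $P_{4}(12v'-9)=3v'(2v'-1)^{2}$ and pair it with the (unique) $P_{A}$--class for which $K_{i}=0$, i.e.\ where $P_{A}$ is a constant times a square. This gives one uniform parametrization instead of three, at the cost of the small $3\mid\kappa$ split. Both produce explicit polynomial families.

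For part (2) the paper computes all $24$ double discriminants $H_{i,j,4s+2}=\op{Disc}_{n}(\op{Disc}_{m}F_{i,j,a})$ as polynomials in $(i,s)$ and checks each has no integer zeros. Your route is cleaner: show once that every $K_{i}$ is a \emph{positive odd} integer when $c$ is odd, list the (at most eighteen) critical values $c_{j}(v_{0})$ of the nine $P_{4}$--pieces, and observe that $-8c\,c_{j}(v_{0})$ is always $\le 0$, non-integral, or even. This replaces the $24$ quadratic--form checks by a single parity/sign observation, though you still have to tabulate the critical values of the nine cubics.

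One small correction: your displayed expressions $K_{i}=(c-2p)^{2}$ and $K_{i}=(3c-2-2\ell)^{2}$ are off by an index shift; with $p=\lfloor i/2\rfloor$ one actually gets $(c-2p-2)^{2}$ on the first range and $(3c-2p-2)^{2}$ on the second. This does not affect your proof, since the only property you use is that $K_{i}$ is a positive odd integer (indeed an odd square), and that is exactly what these corrected expressions give when $c$ is odd.
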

\begin{proof}
The general strategy of the proof is the following. The set of integer solutions of the equation $P_{A}(m)=P_{4}(n)$ is the sum of sets $U_{i,j}, i\in\{0,\ldots, 2a-1\}, j\in\{0,\ldots, 11\}$, where
$$
U_{i,j}=\{(2am+i,12n+j):\;P_{A}(2am+i)=P_{4}(12n+j),\;m, n\in\N\}.
$$
Thus, in order to show that the Diophantine equation $P_{A}(x)=P_{4}(y)$ has infinitely many solutions it is enough to prove that for some $i\in\{0,\ldots, 2a-1\}, j\in\{0,\ldots, 11\}$ the set $U_{i,j}$ is infinite. However, because $\op{deg}P_{A}(2am+i)$ has degree 2 and $\op{deg}P_{4}(12n+j)=3$, the set $U_{i,j}$ is infinite if and only if the discriminant (with respect to the variable $m$) of the polynomial
$$
F_{i,j,a}(m,n)=P_{A}(2am+i)-P_{4}(12n+j)
$$
is a square. Here, we treat $i, j$ and $n\in\N$ as  variables. Moreover, because $P_{4}(12n+j)$ is o degree three, then the discriminant
$$
G_{i,j,a}(n)= \op{Disc}_{m}(F_{i,j,a}(m,n))
$$
is a polynomial of degree three in the variable $n$. Thus, the value of $G_{i,j,a}(n)$ is square for infinitely many values of $n\in\N$ if and only if $G_{i,j,a}$, treated as a polynomial in the variable $n$, has double root. This, in turn, is equivalent with the vanishing of the discriminant of $G_{i,j,a}$. Summing up we get the following implication
$$
U_{i,j,a}\;\mbox{is infinite}\;\Longrightarrow\;H_{i,j,a}:=\op{Disc}_{n}(\op{Disc}_{m}(F_{i,j,a}(m,n)))=0.
$$
From our discussion it follows that we need to investigate the vanishing of $H_{i,j}$. However, before we will go one, let us note that a priori $U_{i,j,a}$ can be finite and the condition $H_{i,j,a}=0$ can be still satisfied (in other words we can not expect to have equivalence between the conditions above). This is clear. Due to the form of the polynomial $F_{i,j,a}$ to get an element of $U_{i,j,a}$ some additional congruence conditions need to be satisfied (which are not seen in discriminant computations). Indeed, let us take $a=9, i=0, j=3$, i.e.,
$$
F_{0,3,9}(m,n)=9 m^2+6 m-(12 n^3-24 n^2-15 n-2).
$$
Then $G_{0,3,9}(n)=108(n+1)(2 n+1)^2$. Thus $G_{0,3,9}(n)$ is a square if and only if $n=3u^2-1$. However, $F_{0,3,9}(m,3u^2-1)=(3m-18u^3+3u+1)(3m+18u^3-3u+1)$ and it is clear that our equation has no solutions.

After this discussion let us back to the proof of the statement.

To get the first part of our theorem we perform case by case analysis. If $a\equiv 0\pmod*{4}$, say $a=4s$ then $H_{2s-2, 3, 4s}=0$ and we have that
$$
G_{2s-2,3,4s}(n)=48(n+1)(2n+1)^2s.
$$
Thus, in order to make the above expression a square, we need to take $n=3su^2-1$. Then
$$
F_{2s-2,3,4s}(m, 3su^2-1)=s(-2 m+18 s u^3-3 u-1)(2 m+18 s u^3-3 u+1)
$$
and $m=\frac{1}{2} \left(18 s u^3-3 u-1\right)$. Summing up: if $u$ is odd positive integer then the numbers
\begin{align*}
x&=8sm+2s-2=2 \left(36 s^2 u^3-6 s u-s-1\right),\\
y&=12n+3=9 \left(4 s u^2-1\right)
\end{align*}
solve the equation $P_{A}(x)=P_{4}(y)$.

Because in the case $a\equiv 1, 3\pmod*{4}$ the reasoning goes in exactly the same way we present only the appropriate values of $i, j$ and the corresponding solutions $x, y$.

If $a\equiv 1\pmod*{4}$, i.e., $a=4s+1$ then we take $i=2s-1, j=2$ and $u$ positive odd number and get
\begin{align*}
x&=\frac{1}{2}(9(4s+1)^2u^3-3(4s+1)u-4(s+1)),\\
y&=(9s+4)u^2-7,
\end{align*}
positive integers solving the equation $P_{A}(x)=P_{4}(y)$.

If $a\equiv 1\pmod*{4}$, i.e., $a=4s+3$ then we take $i=s+1, j=0$ and $u$ positive odd number and get
\begin{align*}
x&=\frac{1}{2}(9(4s+3)^2u^3+(4s+3)u-2(2s+3)),\\
y&=3((4s+3)u^2-1),
\end{align*}
positive integers solving the equation $P_{A}(x)=P_{4}(y)$.

To get the second part of our theorem we need to investigate the vanishing of $H{i,j,4s+2}$. Because we have exact expression for $P_{A}(4(2s+1)m+2i)$ (which is equal to  $P_{A}(4(2s+1)m+2i)$) it is enough to consider $H_{i,j,4s+2}$ for $j=0, \ldots, 11$ as a polynomial in two variables: $s$ and $i$. Because we need to consider two cases $i\in\{0,\ldots, 2s\}$ and $i\in\{2s+1,\ldots, 4s-1\}$ we work with 24 polynomials $H_{i,j,4s+2}$. It is easy compute these polynomials. Each has the form
$$
C(i,j)(2s+1)^2Q_{j}(i,s)R_{j}(i,s),
$$
where $C(i,j)\in\Z$ and $Q_{j}, R_{j}$ are quadratic inhomogeneous polynomials. In each case the quadratic forms $Q_{j}, R_{j}$ has no integer zeros. Because in each case the reasoning is the same we present only one typical example. So let us suppose that $i\in\{0,\ldots, 2s\}$ and take $j=0$. Then
$$
H_{i,0,4s+2}=-27648(2s+1)^2(4(i^2+s^2)-4i(2 s-1)+3)(36(i^2+s^2)-36 i (2s-1)-4 s+25)
$$
and quick computation reveals that each factor is non-zero for $s\in\Z$ and $i\in\N$. Performing the same analysis for the rest of polynomials we get the statement of our theorem.
\end{proof}

The first part of the above result can be further generalized. In order to get the generalization we will need the following simple


\begin{thm}
Let $a\in\N_{\geq 3}$ and put $A=\{1, 2, a\}$. The Diophantine equation $y^2=P_{A}(x)$ has infinitely many solutions in positive integers.
\end{thm}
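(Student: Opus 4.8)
The plan is to use the quasi-polynomial formulas of Theorem \ref{12aexpres} to turn $y^2=P_A(x)$ into a single quadratic-in-$n$ equation on one residue class modulo $2a$, and then to solve that equation either by an outright polynomial parametrisation or via Pell's equation. Write $c=\lfloor a/2\rfloor$. Both displays of Theorem \ref{12aexpres} say, uniformly in the parity of $a$, that for every $i\in\{0,\dots,2a-1\}$ there are explicit integers $b_i,d_i$ with $P_A(2an+i)=an^2+b_i n+d_i$ for $n\in\N$ (the leading coefficient is always $a$). First I would complete the square,
$$
4a\,P_A(2an+i)=(2an+b_i)^2-K_i,\qquad K_i:=b_i^2-4ad_i,
$$
so that $y^2=P_A(2an+i)$ becomes $u^2-aY^2=K_i$ with $u=2an+b_i$, $Y=2y$. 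Hence it suffices to produce, for one convenient $i$, infinitely many solutions $(n,y)\in\N^2$; the natural split is according to whether $a$ is a perfect square.

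If $a=g^2$ is a perfect square with $g\ge2$, I would look for a residue $i_0$ on which $P_A(2an+i_0)$ is literally the square of a linear polynomial, which makes the conclusion immediate. Inspecting Theorem \ref{12aexpres} suggests the right choices: if $g=2e$ (so $a=2c$ with $c=2e^2$), take $i_0=2(e^2-1)\in\{0,\dots,2c-1\}$ and check from the first formula that $P_A(2an+i_0)=2cn^2+(c+2e^2)n+e^2=\bigl(e(2n+1)\bigr)^2$; if $g=2k+1$ with $k\ge1$ (so $a=2c+1$ with $c=2k(k+1)$), take $i_0=2k^2-2\in\{0,\dots,2c\}$ and check from the second formula that $P_A(2an+i_0)=an^2+(4k^2+2k)n+k^2=\bigl((2k+1)n+k\bigr)^2$. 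These two identities are a routine verification against the displayed formulas, which I would leave to the reader. In both cases $(x,y)=(2an+i_0,\,gn+\ell)$ with $\ell=e$, resp. $\ell=k$, solves $y^2=P_A(x)$ for every $n\in\N_{+}$, giving infinitely many solutions in positive integers.

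If $a$ is not a perfect square I would take $i=0$, so $b_0=c+2$, $d_0=1$ and $P_A(2an)=an^2+(c+2)n+1$; in particular $P_A(0)=1$, which supplies the initial point $(u,Y)=(c+2,2)$ on the conic $u^2-aY^2=K_0$. One checks $K_0=(c-2)^2$ when $a=2c$ and $K_0=c(c-4)$ when $a=2c+1$, and that among $a\ge3$ one has $K_0=0$ only for the now-excluded squares $a\in\{4,9\}$, so $K_0\neq0$. Since $a$ is a positive non-square, Pell's equation $x^2-ay^2=1$ has a solution, giving a unit $\eps>1$ of norm $+1$ and infinite order in $\Z[\sqrt a]$, and the elements $(c+2+2\sqrt a)\eps^{m}$, $m\in\N$, yield infinitely many pairwise distinct integer solutions $(u_m,Y_m)$ of $u^2-aY^2=K_0$ with $u_m\to\infty$. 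The main obstacle, and the only delicate point, is that such an $(u_m,Y_m)$ comes from a genuine pair $(n,y)$ only when $u_m\equiv c+2\pmod{2a}$ (so that $n_m:=(u_m-c-2)/(2a)\in\Z$) and $Y_m$ is even. I would handle this by noting that multiplication by $\eps$ acts on coordinates by a matrix in $\op{SL}_2(\Z)$, hence invertibly modulo $2a$, so the orbit of $(c+2,2)$ modulo $2a$ is purely periodic; therefore $(u_m,Y_m)\equiv(c+2,2)\pmod{2a}$ for all $m$ in a fixed arithmetic progression, which yields simultaneously $u_m\equiv c+2\pmod{2a}$ and $Y_m\equiv0\pmod2$. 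For such $m$, with $m$ large, $n_m$ and $y_m:=Y_m/2$ are positive integers with $P_A(2an_m)=y_m^2$; since $u_m\to\infty$ these are infinitely many distinct positive solutions, and the proof is complete.
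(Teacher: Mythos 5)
Your proof is correct, and its overall architecture is the same as the paper's: the same case split on whether $a$ is a perfect square, and in the square case literally the same residue classes ($i=2e^{2}-2=c-2$ for $a=4e^{2}$, and $i=2k^{2}-2$ for $a=(2k+1)^{2}$) on which $P_{A}$ is the square of a linear polynomial — your two identities check out against Theorem \ref{12aexpres}. The only genuine divergence is the mechanism in the non-square case. The paper stays on the conic $y^{2}=an^{2}+(c+2)n+1$ and runs the secant-line parametrization through the base point $(n,y)=(0,1)$: writing the slope as $u/v$ gives $n=\bigl((c+2)v^{2}-2uv\bigr)/(u^{2}-av^{2})$, and specializing $(u,v)$ to solutions of the Pell equation $u^{2}-av^{2}=1$ kills the denominator, so explicit integer formulas for $n$ and $y$ in terms of $(u_{k},v_{k})$ drop out with no congruence conditions to verify. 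You instead complete the square to the norm-form equation $u^{2}-aY^{2}=K_{0}$ (your computation $K_{0}=(c-2)^{2}$, resp.\ $c(c-4)$, and the observation that $K_{0}=0$ only for the excluded $a=4,9$ are correct), propagate the base solution by the unit orbit, and then must recover integrality of $n=(u_{m}-c-2)/(2a)$ and $y=Y_{m}/2$ via periodicity of the orbit modulo $2a$ under an $\op{SL}_2(\Z)$ action. That step is standard and sound, but it is exactly the step the paper's secant trick avoids, at the price of being less explicit (your solutions live along an unspecified arithmetic progression of exponents rather than being given by closed formulas). Both routes are valid; the paper's is more economical here, while yours is more robust in that it would survive a change of base point or residue class without re-deriving a parametrization.
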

\begin{proof}
First we consider the case $a$ is not a square.

Let $a$ be even, i.e., $a=2c$ for some $c$. Take $i=0$ in the first formula in Theorem \ref{12aexpres}, i.e., we work with the Diophantine equation
$$
P_{A}(4cn)=2cn^2+(c+2)n+1=y^2.
$$
In order to show that the equation $P_{A}(4cn)=y^2$ has infinitely many integral solutions, we will follow the standard argument to parameterize (rational) solutions since we know that $(n,y)=(0,1)$ solves the equation. The lines through $(0,1)$ can be written as $y=mn+1.$ Therefore we get that $2cn^2+(c+2)n+1=\left(mn+1\right)^2,$ that is $n=0$ or
$$
n=\frac{c+2-2m}{m^2-2c}.
$$
Here $m=u/v$ is a rational parameter, so we have that
$$
n=\frac{(c+2)v^2-2uv}{u^2-2cv^2}.
$$
For our assumption, $a=2c$ is not a square, then we consider the Pell-equation $u^2-2cv^2=1$ and denote the sequence of positive integer solutions by $(u_k,v_k).$ In this case it follows that $n=(c+2)v_k^2-2u_kv_k$ and $y=(c+2)u_kv_k-2u_k^2.$


Let us now consider the case with $a=2c+1$ odd. Applying the second formula in Theorem \ref{12aexpres} with $i=0$, we work with the Diophantine equation
$$
P_{A}(2(2c+1)n)=(2c+1)n^2+(c+2)n+1=y^2.
$$
Again, in order to show that the equation $P_{A}(4cn)=y^2$ has infinitely many integral solutions, we follow the standard argument. The pair $(n,y)=(0,1)$ solves the equation. The lines through $(0,1)$ can be written as $y=mn+1.$ Therefore, we get that $(2c+1)n^2+(c+2)n+1=\left(mn+1\right)^2,$ that is $n=0$ or
$$
n=\frac{c+2-2m}{m^2-2c-1}.
$$
Here $m=u/v$ is a rational parameter, so we have that
$$
n=\frac{(c+2)v^2-2uv}{u^2-(2c+1)v^2}.
$$
For our assumption, $a=2c+1$ is not a square, then we consider the Pell-equation $u^2-(2c+1)v^2=1$ and denote the sequence of positive integer solutions by $(u_k,v_k).$ In this case it follows that $n=(c+2)v_k^2-2u_kv_k$ and $y=(c+2)u_kv_k-2u_k^2$.

Summing up: we proved that if $a$ is not a square then the Diophantine equation $y^2=P_{A}(x)$ has infinitely many solutions in positive integers.

It remains to deal with the case when $a$ is a square (even or odd). We follow similar lines, so we only provide details in case of $a=4t^2$, that is when $a$ is an even square.
Again, using the first formula from Theorem \ref{12aexpres} with $c=2t^2, i=2t^2-2<c-1$ we get that
$$
P_{A}(8t^2n+2t^2-2)=4t^2n^2+4t^2n+t^2=t^2(2n+1)^2,
$$
and for each $n\in\N_{+}$ the number $P_{A}(8t^2n+2t^2-2)$ is a square and the Diophantine equation $y^2=P_{A}(x)$ has infinitely many solutions in positive integers.

If $a=(2t+1)^2$ is an odd square, then using the second formula from Theorem \ref{12aexpres} with $c=2t(2t+1)$ and $i=2t^2-2$ we find that
$$
P_{A}(2(2t+1)^2n+2t^2-2)=((2t+1)n+t)^2.
$$
For each $n\in\N$ the number $P_{A}(2(2t+1)^2n+2t^2-2)$ is a square and our theorem is proved.
\end{proof}

\begin{thm}\label{12a12b}
Let $a, b\in\N_{\geq 3}, a<b$ such that $a,b$ are divisible by 4 and either $a/2$ or $b/2$ is not a square. Put $A=\{1, 2, a\}, B=\{1, 2, b\}$. The Diophantine equation $P_{A}(x)=P_{B}(y)$ has infinitely many solutions in positive integers.
\end{thm}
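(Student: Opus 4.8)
The plan is to reduce the equation to a conic in two variables and then study its integral points via the Pell equation. Write $c=a/2$ and $d=b/2$; since $4\mid a$ and $4\mid b$, these are even. From Theorem \ref{12aexpres} one factors $P_A(4cn+2j)=(2n+1)(cn+j+1)$ for $0\le j\le c-1$, and similarly for $B$; together with the corollary handling odd residues, the residue classes modulo $4c$ (resp. $4d$) split $P_A(x)=P_B(y)$ into finitely many equations $P_A(4cn+i)=P_B(4dm+j)$, each of the shape ``(quadratic in $n$, leading coefficient $2c$)\ $=$\ (quadratic in $m$, leading coefficient $2d$)''. It therefore suffices to produce, for a single convenient pair $(i,j)$, infinitely many $(n,m)\in\N^2$ solving such an equation.

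Completing squares on both sides and clearing denominators, $P_A(4cn+i)=P_B(4dm+j)$ is equivalent to a generalised Pell equation $X^2-cd\,Y^2=N_{i,j}$, where $X=d(4cn+\alpha_1)$, $Y=4dm+\beta_1$, the $\alpha_1,\beta_1$ being the linear coefficients of the two quadratics, and $N_{i,j}\in\Z$ is explicit (for $i=j=0$ one gets $N_{0,0}=d(c-d)(cd-4)$). The trivial point $(n,m)=(0,0)$ always gives a solution $(X,Y)=(d\alpha_1,\beta_1)$. If $cd$ is not a perfect square, $\Z[\sqrt{cd}]$ has a unit of infinite order, so composing this seed with its powers produces infinitely many integral solutions $(X_k,Y_k)$; since $(X_k,Y_k)$ is periodic modulo $4cd$ and contains the seed, infinitely many of them lie in the seed's residue class and thus come from $n,m\in\N$, and since $X_k,Y_k\to\infty$ these give infinitely many solutions in positive integers.

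The main obstacle is the remaining case, where $cd=(a/2)(b/2)$ — equivalently $ab$ — is a perfect square. Then $X^2-cd\,Y^2$ factors over $\Z$, so for $N_{i,j}\ne 0$ there are only finitely many solutions, and one must choose the residues to make the degenerate conic a pair of \emph{rational} lines, one carrying infinitely many points of $\N^2$. This is where the hypothesis is used: ``not both $a/2$ and $b/2$ are squares'' together with ``$cd$ a square'' forces that neither $c$ nor $d$ is a square, so $c=g\alpha^2$, $d=g\beta^2$ with $g\ge 2$ squarefree and $\sqrt{c/d}=\alpha/\beta\in\Q$. The cleanest instance uses the identity $P_A\!\left(2an+\tfrac a2-2\right)=\tfrac a4\,(2n+1)^2$, valid for every $a$ divisible by $4$ (it specialises to the square case of the previous theorem; it is the case $j=c/2-1$ of the factorisation above), and the analogous identity for $B$; the equation then becomes $a(2n+1)^2=b(2m+1)^2$, which has infinitely many solutions with $2n+1,2m+1$ odd precisely when $\sqrt{b/a}=u/v$ in lowest terms has $u,v$ both odd, i.e. when $v_2(a)=v_2(b)$.

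When $v_2(a)\ne v_2(b)$ one instead picks a common residue $j_0$ and uses identities $P_A(4cn+2j_0)=(2n+1)(cn+j_0+1)$, $P_B(4dm+2j_0)=(2m+1)(dm+j_0+1)$ and checks that the resulting conic contains a rational line: for instance $j_0=c-1$ gives $c(2n+1)(n+1)=(2m+1)(dm+c)$, for which $n=2m$ is a component line when $b=4a$, and in general the right $j_0$ is singled out by matching the leading, linear and constant coefficients along a candidate line $n=\sqrt{d/c}\,(\text{linear in }m)$. Carrying out this last, essentially finite, verification — solving the coefficient-matching conditions for $j_0$ while keeping $j_0$ inside the allowed range $\{0,\dots,c-1\}$ and respecting $2$-adic parities — is the delicate point of the proof; the genus-$0$, discriminant-a-square situation is exactly what makes such a line available, and the hypothesis guarantees we are never in the excluded configuration where both $c$ and $d$ are squares.
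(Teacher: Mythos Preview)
Your reduction to a Pell-type equation $X^{2}-cd\,Y^{2}=N_{i,j}$ (with $c=a/2$, $d=b/2$) via completing the square is sound when $cd$ is not a perfect square: the unit-group/periodicity argument does manufacture infinitely many positive solutions from the seed at $(0,0)$. The gap is the degenerate case $cd$ a square. You correctly deduce that then neither $c$ nor $d$ is a square, and the identity $P_{A}(2an+a/2-2)=\tfrac{a}{4}(2n+1)^{2}$ disposes of the sub-case $v_{2}(a)=v_{2}(b)$. But the remaining sub-case $v_{2}(a)\neq v_{2}(b)$ you leave as an ``essentially finite verification'' that is never carried out, and it is not vacuous: e.g.\ $a=4$, $b=64$ gives $c=2$, $d=32$, $cd=64$ with $v_{2}(a)=2\neq 6=v_{2}(b)$, and this is not of the shape $b=4a$ for which your only explicit fallback ($j_{0}=c-1$, line $n=2m$) is stated. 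Saying ``the right $j_{0}$ is singled out by matching coefficients along $n=\sqrt{d/c}\,(\text{linear in }m)$'' is a heuristic, not a proof; you have not exhibited the residue pair, nor checked it lies in the admissible range, uniformly in $(a,b)$.

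The paper sidesteps this case split by a different and shorter device. It also fixes $i=j=0$, obtaining the conic
\[
2sn^{2}+(s+2)n=2tm^{2}+(t+2)m,\qquad s=a/2,\ t=b/2,
\]
through $(n,m)=(0,0)$, but instead of completing the square it parameterises by the pencil of lines $n=(u/v)m$ through that rational point. Solving for $m$ (and then $n=(u/v)m$) expresses both as quadratic polynomials in $u,v$ over a Pell-type denominator, and the paper lets $(u,v)=(u_{k},v_{k})$ run over the solutions of $v^{2}-su^{2}=1$, available precisely because $s=a/2$ is not a square (interchanging the roles of $a,b$ otherwise). The point is that the Pell equation invoked involves a \emph{single} coefficient $s$ (or $t$), never the product $st=cd$, so the hypothesis ``$a/2$ or $b/2$ is not a square'' is consumed directly and no separate treatment of the square-$cd$ case is required. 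That parameterisation-by-slope, rather than completion of the square, is the manoeuvre your argument is missing.
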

\begin{proof}
Let $a=2s$ and $b=2t.$ It follows from Theorem \ref{12aexpres} that
\begin{eqnarray*}
P_A(4sn)&=&2sn^2+(s+2)n+1,\\
P_B(4tm)&=&2tm^2+(t+2)m+1.
\end{eqnarray*}
Suppose that $a/2=s$ is not a square.
We have the solution $(0,0)$ of the equation $P_A(4sn)=P_B(4tm)$ so we write $n=km$ for some rational number $k=u/v.$
Solving the equation for $m$ provides that either $m=0$ or
$$
m=\frac{(t+2)v^2-suv-2uv}{2su^2-2v^2}.
$$
Since $s$ and $t$ are even integers the numerator is divisible by 2 and we obtain the expression
$$
m=\frac{(s+2)/2uv-(t+2)/2v^2}{v^2-su^2}.
$$
The integer $s$ is not a square, hence we consider the sequence of positive solutions $(u_k,v_k)$ of the Pell-equation $v^2-su^2=1.$ For these solutions we have $m=(s+2)/2u_kv_k-(t+2)/2v_k^2$ and $n=(s+2)/2u_k^2-(t+2)/2u_kv_k.$
\end{proof}

In view of theorem above we formulate the following conjecture.
\begin{conj}
Let $a, b\in\N_{\geq 3}, a<b$ and put $A=\{1, 2, a\}, B=\{1, 2, b\}$. The Diophantine equation $P_{A}(x)=P_{B}(y)$ has infinitely many solutions in positive integers.
\end{conj}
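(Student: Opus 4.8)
\emph{Plan of proof.} The strategy is to reduce $P_A(x)=P_B(y)$ to a finite family of generalized Pell equations, following and substantially extending the mechanism behind Theorem \ref{12a12b}. By Theorem \ref{12aexpres}, for each residue $i$ the map $m\mapsto P_A(2am+i)$ is a quadratic polynomial $am^2+\alpha_i m+\beta_i$ whose \emph{leading coefficient equals $a$}, and likewise $n\mapsto P_B(2bn+j)=bn^2+\gamma_j n+\delta_j$ has leading coefficient $b$. Writing $x=2am+i$, $y=2bn+j$, the solution set is the union, over all pairs $(i,j)$ with $0\le i<2a$, $0\le j<2b$, of the integer points on the conics
\[
am^2+\alpha_i m+\beta_i=bn^2+\gamma_j n+\delta_j .
\]
After multiplying by $4ab^2$ and completing both squares, the substitution $Z=b(2am+\alpha_i)$, $Y=2bn+\gamma_j$ turns each conic into a generalized Pell equation
\[
Z^2-ab\,Y^2=N_{i,j},\qquad N_{i,j}=b^2\alpha_i^2-ab\,\gamma_j^2-4ab^2(\beta_i-\delta_j).
\]
Everything is therefore governed by whether $ab$ is a perfect square.

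First I would settle the case where $ab$ is \emph{not} a perfect square, which already goes well beyond Theorem \ref{12a12b}. Here the trivial identity $P_A(0)=P_B(0)=1$ supplies the seed $(i,j)=(0,0)$, $(m,n)=(0,0)$, that is $(Z_0,Y_0)=(b\alpha_0,\gamma_0)$ with $\alpha_0=\lfloor a/2\rfloor+2$ and $\gamma_0=\lfloor b/2\rfloor+2$; and $N_{0,0}=b(b\alpha_0^2-a\gamma_0^2)\neq 0$, since $N_{0,0}=0$ would force $ab$ to be a square. As $ab$ is a non-square, the Pell equation $z^2-ab\,w^2=1$ has a nontrivial solution $(z_1,w_1)$, and the multiplications $(Z+Y\sqrt{ab})=(Z_0+Y_0\sqrt{ab})(z_1+w_1\sqrt{ab})^k$ produce infinitely many solutions $(Z_k,Y_k)$ with $Z_k,Y_k\to\infty$. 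The underlying linear recurrence has determinant $z_1^2-ab\,w_1^2=1$, so its reduction modulo $2ab$ is a bijection and the orbit of $(Z_0,Y_0)$ is purely periodic; hence for $k$ in a fixed residue class one has $Z_k\equiv b\alpha_0\pmod*{2ab}$ and $Y_k\equiv\gamma_0\pmod*{2b}$. These congruences make $m_k=(Z_k/b-\alpha_0)/(2a)$ and $n_k=(Y_k-\gamma_0)/(2b)$ positive integers tending to infinity, yielding infinitely many positive solutions $x=2am_k$, $y=2bn_k$.

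It remains to treat the case $ab=d^2$. Now the Pell equation $z^2-ab\,w^2=1$ is trivial, the hyperbola has rational slopes at infinity, and $Z^2-ab\,Y^2=(Z-dY)(Z+dY)=N_{i,j}$ has only finitely many solutions whenever $N_{i,j}\neq 0$. Infinitely many solutions can therefore arise only from a residue pair for which the conic \emph{degenerates} into two rational lines, i.e. for which
\[
N_{i,j}=0\quad\Longleftrightarrow\quad b\alpha_i^2-a\gamma_j^2=4ab(\beta_i-\delta_j).
\]
Given one such admissible pair, the line $Z=dY$ carries infinitely many integer points, and intersecting it with the residue classes modulo $2a$ and $2b$ while imposing positivity is a routine linear-Diophantine argument that produces infinitely many positive solutions $(x,y)$.

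The main obstacle is exactly the existence, for every $a<b$ with $ab$ a perfect square (and $a,b\ge 3$), of an admissible residue pair solving this degeneracy equation. Substituting the explicit coefficients $\alpha_i,\beta_i,\gamma_j,\delta_j$ of Theorem \ref{12aexpres}, the problem becomes that of solving a single quadratic Diophantine relation among $i\in\{0,\ldots,2a-1\}$ and $j\in\{0,\ldots,2b-1\}$, and the difficulty is to guarantee a solution \emph{uniformly} in $a$ and $b$: the search space grows with $a,b$, but so does the rigidity forced by $ab=\square$, and it is not clear that a degenerate pair must always exist. I expect this to be the crux; should it fail for some square $ab$, one would need genuinely new input, for instance exploiting the finer $p$-adic divisibility of $N_{i,j}$ or combining several residue classes simultaneously. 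This is precisely why the statement is posed here as a conjecture rather than a theorem.
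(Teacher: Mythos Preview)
This statement is a \emph{conjecture} in the paper; the authors give no proof and remark only that for any fixed $a,b$ one can locate a suitable residue pair $(i,j)$, but that they ``were unable to get the general result.'' There is thus no proof in the paper to compare against, and your closing paragraph makes clear you are not claiming one either.

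Your argument for the case $ab$ not a perfect square is correct and goes well beyond the paper's partial Theorem~\ref{12a12b}. The seed $P_A(0)=P_B(0)=1$ at $(i,j)=(0,0)$, the completion of the square to $Z^2-abY^2=N_{0,0}$ with $N_{0,0}=b(b\alpha_0^2-a\gamma_0^2)\neq 0$, and the pure periodicity of the Pell orbit modulo $2ab$ (guaranteeing that $m_k,n_k$ are integers along an arithmetic progression of $k$'s) all go through and deliver infinitely many positive solutions whenever $ab$ is a nonsquare, with no divisibility hypothesis on $a,b$. Note, however, that this does not strictly contain Theorem~\ref{12a12b}: the paper reduces instead to the Pell equation $v^2-(a/2)u^2=1$, which handles for instance $(a,b)=(4,16)$, where $ab=64$ is a square but $a/2=2$ is not. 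Your analysis of the remaining case $ab=\square$---that each conic has only finitely many integer points unless it degenerates, so that one would need $N_{i,j}=0$ for some uniformly chosen residue pair---is exactly the obstruction the authors allude to. In sum, what you have written is a correct proof of a substantial special case together with an accurate account of why the full statement remains conjectural; it is not, and does not pretend to be, a proof of the conjecture.
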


Let us note that to prove the above conjecture it is enough to find a pair $(i,j)$ of integers such that $i\in\{0,\ldots, 2a-1\}, j\in\{0,\ldots, 2b-1\}$ and the Diophantine equation $P_{A}(2ax+i)=P_{B}(2by+j)$ has infinitely many solutions in integers. Although for any fixed values of $a, b$ it is easy to find suitable $i, j$ we were unable to get the general result.

\section{Remarks on the Diophantine equation $y^2=P_{A}(x)$}\label{sec5}

A difficult and still unsolved question is whether the number $p(n)$ can be a perfect power. Let us recall that $p(n)$ counts the number of all partitions of $n$, i.e.,
$$
\prod_{n=1}^{\infty}\frac{1}{1-x^{n}}=\sum_{n=0}^{\infty}p(n)x^{n}.
$$
In other words, we do not know any example of $n\geq 2$ such that $y^k=p(n)$ for some $k\in\N_{\geq 2}$. In fact Zhi-Wei Sun conjectured that the equation $y^{k}=p(n)$ has no solutions in positive integers $n, y, k$ with $k\geq 2$. Let us also note that Alekseyev checked that there are no solutions with $n\leq 10^{8}$ \cite{mathover}.

A question arises whether some results concerning the equation $y^2=P_{k}(x)$ can be proved for some values of $k\in\N_{+}$. We know that the Diophantine equation $y^2=P_{k}(x)$ has infinitely many solutions in positive integers for $k\leq 4$. Indeed, to get the result for $k=3$ it is enough to back to the explicit form of $P_{3}(6n+i)$ presented in the proof of Theorem \ref{P3P4}. It is easy to see that for $i\in\{0, 1, 4, 5\}$ the Diophantine equation
$$
y^2=P_{3}(6n+i)
$$
has infinitely many solutions in positive integer. For example, if $i=4$ we deal with the equation $y^2=(n+1)(3n+4)$. Thus, if $(u_{k}, v_{k})$ is a solution of the Pell equation $v^2-3u^2=1$ then the pair $(n, y)=(u_{k}^2-1, u_{k}v_{k})$ solves our equation.

If $k=4$ then again we back to the explicit form of $P_{4}(6n+2i+1)$ for $i=0, 1, 2$ and $P_{4}(12n+2i)$ for $i=0, 1, \ldots, 5$. A quick inspection reveals that the equation $y^2=P_{4}(6n+2i+1)$ has infinitely many solutions for $i=1, 2$. For example, if $i=1$ then it is enough to take $n=6u^2-2$ and $y=3u(6u^2-1)$. Similarly, it is easy to see that equation $y^2=P_{4}(12n+2i)$ has infinitely many solutions for $i=3, 4$.

The first non-trivial problem is characterization of the positive integer solutions of the Diophantine equation $y^2=P_{5}(x)$. We prove the following

\begin{thm}
The equation $y^2=P_{5}(x)$ has only finitely many solutions in positive integers. More precisely, the pair $(x,y)$ is a solution if and only if $(x,y)=(1,1), (2027, 77129)$.
\end{thm}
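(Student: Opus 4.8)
The plan is to follow the strategy of the proof of Theorem~\ref{P3P5}. Since $\operatorname{lcm}(1,2,3,4,5)=60$ and $\#\{1,2,3,4,5\}=5$, for every $i\in\{0,\ldots,59\}$ the function $P_{i,60,5}(n):=P_{5}(60n+i)$ is a polynomial in $n$ of degree $4$ with positive leading coefficient. Writing $x=60n+i$, the equation $y^{2}=P_{5}(x)$ decomposes into the $60$ Diophantine equations
$$
y^{2}=P_{i,60,5}(n),\qquad i\in\{0,\ldots,59\},
$$
so it suffices to determine all solutions in non-negative integers $n$ of each of them and then to recover the pairs $(x,y)=(60n+i,y)$.

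For each index $i$ for which the quartic $P_{i,60,5}$ has no repeated factor, the equation $y^{2}=P_{i,60,5}(n)$ defines a curve of genus $1$, whose integral points I would compute with the {\sc Magma} routine \texttt{IntegralQuarticPoints()}, based on \cite{Tz1996} (or, after bringing the quartic to Weierstrass form, with \texttt{IntegralPoints()}). For the remaining indices the quartic $P_{i,60,5}$ has a repeated factor; since $P_{i,60,5}$ is not a perfect square of a polynomial (its leading coefficient is not a perfect square), such a degenerate quartic has the shape $c\,\ell(n)^{2}q(n)$ with $\ell$ linear and $q$ quadratic, or $c\,\ell(n)^{2}$ with $\ell$ quadratic. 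In the first shape one is reduced, after imposing $\ell(n)\mid y$, to a Pell-type equation $Y^{2}=c\,q(n)$; in the second to $Y^{2}=c$. In each such case a short congruence and divisibility analysis — together with the extra constraint $x\equiv i\pmod{60}$ — shows that no infinite family of solutions survives, only possibly a few sporadic small ones. Assembling all the solutions found, one checks that the only surviving pairs are $(x,y)=(1,1)$ (coming from $i=1$, $n=0$) and $(x,y)=(2027,77129)$ (coming from $i=47$, $n=33$).

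The main obstacle, as in Theorem~\ref{P3P5}, is that \texttt{IntegralQuarticPoints()} is not guaranteed to terminate: on those quartics of positive Mordell--Weil rank for which the underlying $2$-descent is expensive, {\sc Magma} will fail to certify completeness of the list of integral points. For each such stubborn equation I would repeat the descent used in the proof of Theorem~\ref{P3P5}: factor $P_{i,60,5}(n)$ over $\Z$, conclude from $y^{2}=P_{i,60,5}(n)$ that a linear or quadratic factor must equal $\delta$ times a square with $\delta$ ranging over the finitely many divisors of a fixed integer, substitute to obtain for each $\delta$ an explicit elliptic curve in Weierstrass form, compute its integral points with \texttt{IntegralPoints()}, and keep only those whose first coordinate meets the divisibility condition forced by the substitution. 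This reduces everything to finitely many integral-point computations on elliptic curves plus a finite congruence check; the bulk of the work — and the place where care is most needed — is verifying that none of these auxiliary curves contributes a solution beyond $(1,1)$ and $(2027,77129)$.
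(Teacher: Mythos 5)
Your proposal follows essentially the same route as the paper: decompose into the $60$ quartic equations $y^{2}=P_{5}(60n+i)$, resolve most of them with \texttt{IntegralQuarticPoints()}, and for the handful of stubborn indices (the paper identifies $i\in\{21,24,48,51,54,57\}$, plus four indices eliminated by local insolvability over $\Q_{5}$) perform exactly the descent you describe — forcing a factor to be $\delta$ times a square for $\delta$ ranging over divisors of a fixed integer, passing to elliptic curves in Weierstrass form, and filtering integral points by the induced divisibility condition. The approach is correct and matches the paper's proof in both structure and the identification of the two solutions $(1,1)$ and $(2027,77129)$.
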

\begin{proof}
We have 60 curves of the form $y^2=P_{5}(60n+i), i\in\{0,\ldots, 59\}$. If $i\in\{5, 20, 25, 40\}$ the corresponding quartic has no $\mathbb{Q}_{5}$-rational points, and thus has no rational points at all.

Similarly as in the proof of Theorem \ref{P3P4} and Theorem \ref{P3P5}, we apply the procedure \texttt{IntegralQuarticPoints()} to determine all integral solutions in the remaining 56 cases. In particular, the solution $(1, 1)$ comes from the equation $y^2=P_{5}(60n+1)$ with $n=0$. The solution $(2027, 77129)$ comes from the solution $(n,y)=(33, 77129)$ of the equation $y^2=P_{5}(60n+47)$. The procedure works well, except in 6 special cases.  Here we do not get any error message like in the special cases appearing in the proof of Theorem \ref{P3P5}, but warnings about time-consuming final  enumerations. The 6 problematic polynomials correspond to $i\in\{21, 24, 48, 51, 54, 57\}$.
The equations (up to multiplication by $16$) corresponding to $i=21, 24$ give the following equations
\begin{eqnarray*}
5y^2&=&u(36u^3+108u^2+34u+12),\quad u=5(2n+1),\\
5y^2&=&u(36u^3-108u^2+34u-12),\quad u=2(5n+3),
\end{eqnarray*}
respectively. Hence we need to resolve the following elliptic equations
$$
Y^2=X^3+108\delta X^2+3384\delta^2X+15552\delta^3,
$$
where $\delta$ divides 60. We only get the trivial solution given by $u=0$.

For $i\in\{48, 51, 54, 57\}$ after the substitution $u=2(n+1)$ we get the following quartic equations
\begin{eqnarray*}
y^2&=&u(4500u^3-2700u^2+470u-12),\\
y^2&=&u(4500u^3-900u^2-70u+4),\\
y^2&=&u(4500u^3+900u^2-70u-4),\\
y^2&=&u(4500u^3+2700u^2+470u+12).
\end{eqnarray*}
We obtain elliptic equations in a similar way as before, so we omit details. It turns out that we get only the trivial solution with $u=0$ from these cases.
\end{proof}

The case of the equation $y^2=P_{6}(x)$ is far more difficult. To get the solutions we need to consider 60 genus 2 curves
$$
C_{i}:\;y^2=P_{6}(60n+i),\quad i=0, \ldots, 59.
$$
Let $J_{i}=\op{Jac}(C_{i})$ be the Jacobian of the curve $C_{i}$ and by $r_{i}$ denote the rank of $J_{i}$. We checked that $r_{i}\leq 5$ for $0\leq i\leq 59$.

\begin{equation*}
\begin{array}{|l||l|}
\hline
  r & \mbox{values of}\;i\;\mbox{such that}\;r_{i}\leq r \\
\hline
  0 & 3, 14, 34, 47, 50, 51, 55, 59 \\
  1 & 18, 22, 27, 32, 35, 38, 41, 43, 44, 45, 46, 54  \\
  2 & 0, 7, 8, 9, 15, 23, 24, 25, 26, 28, 29, 30, 33, 36, 37, 39, 40, 42, 49, 52, 53, 57, 58 \\
  3 & 2, 5, 6, 11, 17, 31, 48 \\
  4 & 4, 10, 13, 16, 19, 20, 21, 56 \\
  5 & 1, 12\\
\hline
\end{array}
\end{equation*}
\begin{center}
Table. Upper bounds for the $\Q$-rank of the Jacobian $J_{i}$ of the curve $C_{i}:\;y^2=P_{6}(60x+i)$ for $i=0, \ldots, 59$.
\end{center}

It is curious that the polynomial $P_{6}(60n+i)$ is reducible (in the ring $\Q[n]$) for $i\in\{40,\ldots, 59\}$ and thus, instead of working with genus two curve we need to play with certain curves of the type $y^2=Q_{i}(x)$, where $Q_{i}$ is a quartic polynomial.

If the rank of the Mordell-Weil group is less than the genus of the curve, that is 2 in these cases, then classical Chabauty's method \cite{Chab} may be applied to determine all rational points on the hyperelliptic curves. If the rank is greater than or equal to 2, then there are two different approaches to compute the set of integral points on the curves (see \cite{BMSST, GallHyp}). The difficulty is that one needs a Mordell-Weil basis. Among the above curves there are some for which we were not able to obtain such bases, these are as follows
$C_i$ with
$$
i\in\{15,16,23,24,27,28,29,31,32,33,35,36,38,39
\}.$$
The most interesting one may be the hyperelliptic curve given by
$$
y^2=12x^5 + 1125x^4 + 41960x^3 + 778050x^2 + 7171020x + 26276400,
$$
which, as computed with the help of {\sc Magma}, is the minimal model of the curve $C_{27}:~~y^2=P_6(60n+27).$ In this case the rank is 1, however we were unable to found a generator of the Mordell-Weil group.

We finish with the following

\begin{conj}
Let $n\in\N_{\geq 6}$. The only positive integer solution of the Diophantine equation $y^2=P_{n}(x)$ is $x=y=1$.
\end{conj}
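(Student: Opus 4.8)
The plan is to split the analysis according to the size of $x$ relative to $n$. If $1\le x\le n$, every partition of $x$ has all parts $\le x\le n$, so $P_{n}(x)=p(x)$, the unrestricted partition number; hence on this range $y^{2}=P_{n}(x)$ is exactly $y^{2}=p(x)$, which by the conjecture of Zhi-Wei Sun recalled above, together with Alekseyev's verification \cite{mathover} for $x\le 10^{8}$, has only the solution $x=y=1$. In particular the present conjecture is at least as hard as the square case of Sun's conjecture, so an unconditional proof is out of reach with current methods; the realistic target is a proof conditional on Sun's conjecture, and the genuinely new content is the range $x>n$, where a finiteness result must be established.

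For $x>n$ I would use quasi-polynomiality. With $L_{n}=\op{lcm}(1,\ldots,n)$, each map $z\mapsto P_{n}(L_{n}z+i)$, $i\in\{0,\ldots,L_{n}-1\}$, is a polynomial $Q_{n,i}(z)\in\Q[z]$ of degree $n-1\ge 5$. Writing $Q_{n,i}=g^{2}h$ with $h$ squarefree, the curve $y^{2}=Q_{n,i}(z)$ is birational to $w^{2}=h(z)$, which has positive genus unless $\deg h\le 2$ (and since $\deg(g^{2}h)=n-1\ge 5$ this forces $\deg g\ge 2$). So the first step is to check that no residue class $i$ produces such an exceptional $Q_{n,i}$ (such a class would typically carry infinitely many solutions and contradict the conjecture) — a finite verification for each small $n$ that one expects to follow, for all $n\ge 6$, from the structure of the quasi-polynomial $P_{n}$. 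For every other $(n,i)$, Siegel's theorem gives only finitely many integral points, and one determines them by the apparatus already used in this paper: Chabauty's method \cite{Chab} and the Mordell--Weil sieve when the rank of the Jacobian is below the genus, descent combined with elliptic Chabauty or covering collections otherwise \cite{BMSST,GallHyp}, and explicit bounds from linear forms in logarithms as a fallback; in each case one checks that no integral point yields a solution with $x>n\ge 6$.

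The main obstacle is uniformity in $n$, and it is compounded by the fact that even $n=6$ is not fully settled — as explained in this section, for several of the genus-$2$ curves $y^{2}=P_{6}(60z+i)$ a Mordell--Weil basis is presently unknown — so each individual value of $n$ is already delicate. More fundamentally, there are infinitely many $n$, the genera $\lfloor(n-2)/2\rfloor$ grow, and the coefficients of $Q_{n,i}$ grow roughly like $L_{n}^{\,n-1}/(n!\,(n-1)!)$, so effective bounds on the solutions grow with $n$ and cannot be cleared by any fixed computation. A plausible route to a uniform statement would combine a Newton-polygon analysis — for most $(n,i)$ find a prime $p$ at which some coefficient of $Q_{n,i}$ has odd valuation, forcing $p\mid z$ and, on iterating, divisibility constraints that eliminate the class — with a uniform gap principle for the surviving classes, where $y^{2}=Q_{n,i}(z)$ is Pell-like only at top order and that approximation is provably destroyed by the next coefficient. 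Making either ingredient uniform in $n$, rather than verifying it class by class, is the crux; I expect that, even granting Sun's conjecture, a complete proof will require a genuinely new idea specific to the family $\{P_{n}\}_{n\ge 6}$ rather than a mechanical application of existing tools to each member.
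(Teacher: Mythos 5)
The statement you were asked to prove is stated in the paper as a \emph{conjecture}; the paper offers no proof of it, only the partial evidence of Section \ref{sec5} (the resolution of $y^2=P_5(x)$ and the incomplete analysis of the sixty genus-two curves $y^2=P_6(60z+i)$, several of which lack a known Mordell--Weil basis). So there is no proof in the paper to compare yours against, and your submission — correctly — does not claim to be a proof either: it is an assessment of the difficulty together with a strategy sketch. That assessment is sound, and one observation in it is genuinely worth recording: since every part of a partition of $x$ is at most $x$, one has $P_n(x)=p(x)$ for $1\le x\le n$, so the conjecture, taken over all $n\ge 6$, implies that $p(x)$ is never a perfect square for $x\ge 2$, i.e.\ it contains the square case of Zhi-Wei Sun's conjecture quoted at the start of Section \ref{sec5}. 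This makes precise the paper's informal remark that the problem is ``difficult and still unsolved,'' and it shows that no unconditional proof can be expected. Your plan for the complementary range $x>n$ (quasi-polynomiality, $L_n=\op{lcm}(1,\ldots,n)$, Siegel's theorem on each $y^2=Q_{n,i}(z)$ after checking that no $Q_{n,i}$ degenerates to a square times a low-degree factor, then Chabauty or integral-point machinery class by class) is exactly the method the paper itself deploys for $n=5$ and attempts for $n=6$, and your diagnosis of the obstruction — lack of uniformity in $n$, growing genus, and the failure already at $n=6$ to produce Mordell--Weil bases — matches the state of affairs the authors report. In short: no gap to point out beyond the one you have already named yourself, namely that the statement is open; just be aware that what you have written is a correct problem analysis, not a proof, and should be presented as such.
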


\bigskip

Motivated by the results above one can ask a more general

\begin{ques}
Let $A\subset \N_{+}$ and suppose that the Diophantine equation $y^2=P_{A}(x)$. How large the number $\#A$ can be?
\end{ques}

In case of $\#A=5$ there is a large number of sets such that $P_{A}(L_{A}n+i)$ is a square of a polynomial in $n$. More precisely, with the constraint $\op{max}(A)\leq 15$, there are exactly 119 different pairs $(A,i)$ such that $P_{A}(L_{A}n+i)$ is a square of a polynomial with integer coefficients. For example, if $A=\{1, 2, 8, 10, 15\}$, then $L_{A}=120$ and for $i=1, 11, 41, 43, 73, 83, 91, 113$ we have $P_{A}(L_{A}n+i)$ is a square of a polynomial. In particular,
$$
P_{A}(120n+1)=(4n+1)^2(15n+1)^2.
$$
In the table below we collect data concerning our search.
\begin{equation*}
\begin{array}{|l|l|l|}
\hline
A & L_{A} & i \\
\hline
 \{1,2,8,10,15\} & 120 & 1,11,41,43,73,83,91,113 \\
 \{1,4,5,10,12\} & 60 & 12,16,36,52 \\
 \{1,4,8,9,12\} & 72 & 1,13,19,25,37,43,49,61,67 \\
 \{1,5,6,8,10\} & 120 & 2,8,13,17,32,37,53,58,73, 77,82,88,97,98,112,113\\
 \{2,3,7,8,14\} & 168 & 32,102,144,158 \\
 \{2,4,5,6,10\} & 60 & 12,16,17,21,36,41,52,57 \\
 \{3,4,6,9,12\} & 36 & 3,7,11,27,31,35 \\
 \{3,5,6,9,15\} & 90 & 18,23,24,28,29,34,54,59,64,78,83,88 \\
 \{4,5,6,12,15\} & 60 & 27,51 \\
 \{4,7,9,12,14\} & 252 & 58,64,142,148,226,232 \\
 \{5,6,8,9,10\} & 360 & 8,29,53,74,89,98,104,113,128,149,173,194,209,\\
                &     & 218,224,233,248,269,293,314,329,338, 344,353 \\
 \{5,7,9,14,15\} & 630 & 47,113,173,197,257,323,383,407,467,533,593,617 \\
 \{7,8,10,14,15\} & 840 & 182,212,364,422,574,604,812,814 \\
\hline
\end{array}
\end{equation*}
\begin{center}
Table. The sets $A$ such that $\#A=5, \op{max}(A)\leq 15$ and there is an $i\in\{0,\ldots, L_{A}-1\}$ such that $P_{A}(L_{A}n+i)$ is a square of a polynomial in $\Z[n]$,
\end{center}

In case of $\#A=6$ there is a large number of sets such that $P_{A}(L_{A}n+i)$ is a square of a polynomial (with rational coefficients) in $n$ times a linear factor (note that this is only possibility to get infinitely many square values). However, in each case the values of a corresponding linear factor nor the value of $P_{A}(L_{A}n+i)$ can be a square of an integer.

\
We were able to find only the one set $A$ with 7 elements, $\op{max}(A)\leq 10$ and such that $y^2=P_{A}(x)$ has infinitely many solutions in positive integers. More precisely, if $A=\{1,2,4,5,8,9,10\}$ then
\begin{align*}
P_{A}(360n+95)&=25(3n+1)^2(18n+5)^2(36n+13)(40n+13),\\
P_{A}(360n+226)&=25(3n+2)^2(18n+13)^2(36n+23)(40n+27).
\end{align*}
One can easily check that the factor $(36n+13)(40n+13)$ is a square infinitely often. The smallest values of $n$ which makes this factor a square, are $n=0, 494, 712842, \ldots$.  However, the factor $(36n+23)(40n+27)$ takes square values for infinitely many values negative values of $n$ and thus is not of interests for us.

\section{Problems, questions and conjectures}\label{sec6}


Besides the conjectures stated in previous sections, we formulate now several question and conjectures which hopefully will stimulate further research.

\begin{ques}
Let $k\in\N_{\geq 2}$ and $f\in\Z[x]$ be given. Does there exist an ascending sequence of sets $A_{2}=\{a_{1}, a_{2}\}\subset\ldots \subset A_{k}=\{a_{1},\ldots, a_{k}\}\ldots \subset\N_{+}$ such that the Diophantine equation $P_{A_{k}}(x)=f(y)$ has at least $C_{k,f}$ solutions in positive integers and $C_{k,f}\rightarrow +\infty$ as $k\rightarrow +\infty$?
\end{ques}

Let us observe that without the condition $C_{k,f}\rightarrow +\infty$ the question is not difficult. Indeed, let us take $A_{2}=\{a_{1}, a_{2}\}\subset\N_{+}$ and suppose that $\gcd(a_{1},a_{2})=1$. As we already proved in Theorem \ref{a1a2} the Diophantine equation $P_{A_{2}}(x)=f(y)$ has infinitely many solutions in positive integers. If $C\in\N$ is fixed let us take an increasing sequence $\{a_{3}, a_{4},\ldots, a_{k}\}$ of positive integers such that $a_{3}$ is grater then the smallest integer $N$ such that there is at least $C$ values of $x$ for which there is an integer $y$ satisfying $P_{A_{2}}(x)=f(y)$. Then, for $A_{k}=\{a_{1}, a_{2}, a_{3},\ldots, a_{k}\}$ the Diophantine equation
$$
P_{A_{k}}(x)=f(y)
$$
has at least $C$ solutions in positive integers. Indeed, this is simple consequence of the recurrence relation satisfied by the sequence $\{P_{A_{k}}(n)\}_{n\in\N}$. Indeed, because $P_{A_{k}}(n)=P_{A_{k-1}}(n)$ for $n<a_{k}$, then $P_{A_{k}}(n)=P_{A_{2}}(n)$ for $n<\min\{a_{3},\ldots, a_{k}\}=a_{3}$ and hence the result.

\bigskip

We proved that the equation $P_{3}(x)=P_{5}(x)$ has only finitely many solutions in positive integers and it is quite natural to ask whether there are $A, B$ satisfying $\#A=3, \#B=5$, such that the equation $P_{A}(x)=P_{B}(y)$ has infinitely many solutions in positive integers. To get the result in this direction we will need the following.

\begin{lem}\label{specb}
Let $b\in\N_{\geq 4}$ and put $B=\{1, 2, 3, 4, b\}$.
\begin{enumerate}
\item If $b=4(6k+1), j=3(8k-1)$ for some $k\in\N_{+}$, then $P_{B}(3bn+j)=(3n+2)((6k+1)n+2k)Q_{1}(k,n)$, where
$$
Q_{1}(k,n)=3(6k+1)^2n^2+2(9k+1)(6k+1)n+6k(4k+1).
$$
\item If $b=4(6k+5), j=24k+13$ for some $k\in\N_{+}$, then $P_{B}(3bn+j)=(3n+1)((6k+5)n+4k+3)Q_{2}(k,n)$, where
$$
Q_{2}(k,n)=3(6k+5)^2n^2+2(6k+5)(9k+7)n+24k^2+36k+1).
$$
\item If $b=4(12k+2), j=48k+1$ for some $k\in\N_{+}$, then $P_{B}(3bn+j)=(3n+1)(2(6k+1)n+8k+1)Q_{3}(k,n)$, where
$$
Q_{3}(k,n)=12(6k+1)^2n^2+2(6k+1)(36k+5)n+96k^2+24k+1.
$$
\item If $b=4(12k+10), j=48k+1$ for some $k\in\N_{+}$, then $P_{B}(3bn+j)=(3n+2)(2(6k+5)n+4k+3)Q_{4}(k,n)$, where
$$
Q_{4}(k,n)=12(6k+5)^2n^2+2(6k+5)(36k+29)n+3(4k+3)(8k+7).
$$
\end{enumerate}
\end{lem}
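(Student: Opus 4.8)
The plan is to reduce everything to the quasi-polynomial behaviour of $P_{4}$, which is already made explicit in the proof of Theorem~\ref{P3P4}. Put $F_{B}(x)=\sum_{n\ge 0}P_{B}(n)x^{n}$. From the identity $(1-x^{b})F_{B}(x)=\frac{1}{(1-x)(1-x^{2})(1-x^{3})(1-x^{4})}=\sum_{n\ge 0}P_{4}(n)x^{n}$, comparison of coefficients gives the recurrence $P_{B}(n)=P_{4}(n)$ for $0\le n<b$ and $P_{B}(n)=P_{B}(n-b)+P_{4}(n)$ for $n\ge b$. In each of the four cases one checks that $0\le j<b$, so iterating the recurrence collapses to
$$
P_{B}(3bn+j)=\sum_{r=0}^{3n}P_{4}(rb+j),\qquad n\ge 0.
$$
Moreover, in each case $b=4b'$ or $b=8b'$ with $\gcd(b',6)=1$, hence $L_{B}:=\op{lcm}(1,2,3,4,b)=\op{lcm}(12,b)=3b$; this explains the shift $3b$ in the statement and guarantees that $P_{B}(3bn+j)$ is an honest polynomial in $n$, of degree $4$.

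Now fix one case, say (1), so $b=4(6k+1)$, $j=3(8k-1)$. Then $b\equiv 4\pmod{12}$ and $j\equiv 9\pmod{12}$, so as $r$ runs over the residue classes $0,1,2\pmod 3$ the quantity $rb+j$ runs through the residues $9,1,5\pmod{12}$. Writing $r=3s+t$ with $t\in\{0,1,2\}$ and using that $3b$ is a multiple of $12$, one finds $P_{4}(rb+j)=P_{\ast,6,4}\big(2(6k+1)s+c_{t}\big)$ with $\ast\in\{3,1,5\}$ for $t=0,1,2$ and $c_{t}\in\{4k-1,\,8k,\,12k\}$ correspondingly, where $P_{1,6,4},P_{3,6,4},P_{5,6,4}$ are the three period-$6$ branches of $P_{4}$ listed in the proof of Theorem~\ref{P3P4}. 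Splitting $\sum_{r=0}^{3n}$ accordingly, $P_{B}(3bn+j)$ becomes a sum of three pieces of the form $\sum_{s}(\text{a cubic polynomial in }s)$; evaluating these with the closed forms for $\sum s,\ \sum s^{2},\ \sum s^{3}$, adding the contributions, and factoring the resulting quartic in $n$ (with coefficients polynomial in $k$) yields $(3n+2)\big((6k+1)n+2k\big)Q_{1}(k,n)$. Cases (2), (3), (4) are handled in exactly the same way; only the residues of $b$ and $j$ modulo $12$, the order in which the branches $P_{1,6,4},P_{3,6,4},P_{5,6,4}$ occur, and the constants $c_{t}$ change.

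In the write-up I would actually present this as a verification by induction on $n$, in the style of the proof of Theorem~\ref{12aexpres}: assert each of the four identities, check the base case $n=0$ (in case (1), for example, both sides equal $24k^{2}(4k+1)$, since $P_{B}(j)=P_{4}(j)=P_{3,6,4}(4k-1)$), and then verify the single step
$$
P_{B}(3bn+j)-P_{B}\big(3b(n-1)+j\big)=P_{4}(3bn+j)+P_{4}\big((3n-1)b+j\big)+P_{4}\big((3n-2)b+j\big),
$$
which, once the three residues modulo $12$ are pinned down, is a routine polynomial identity in $n$ and $k$. I expect the only real obstacle to be bookkeeping: nothing here is deep, but one must correctly identify, in each case, which branch of $P_{4}$ governs each of the three progressions $r\equiv 0,1,2\pmod 3$, and then carry out a couple of not-entirely-painless factorisations. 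The genuinely non-obvious ingredient --- why these particular pairs $(b,j)$ force the degree-$4$ polynomial $P_{B}(3bn+j)$ to split off two linear factors, which is precisely what will make the lemma useful for the intended construction with $\#A=3,\ \#B=5$ --- is, I suspect, the outcome of a computer search rather than of the proof itself.
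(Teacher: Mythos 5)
Your proposal is correct and follows essentially the same route as the paper: both derive the recurrence $P_{B}(n)=P_{B}(n-b)+P_{4}(n)$ (with $P_{B}(n)=P_{4}(n)$ for $n<b$) and then verify the four identities by induction on $n$ using the explicit quasi-polynomial branches of $P_{4}$; the paper simply omits the ``tiresome details'' that you spell out. Your residue bookkeeping and base-case check (e.g.\ $P_{B}(j)=P_{3,6,4}(4k-1)=24k^{2}(4k+1)$ in case (1)) are accurate, so the write-up is a faithful, more detailed version of the paper's argument.
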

\begin{proof}
Let us note that the sequence $\{P_{B}(n)\}_{n\in\N}$ satisfies the following recurrence relation
$$
P_{B}(n)=\begin{cases}\begin{array}{ll}
                        P_{A}(n),             & n<b, \\
                        P_{B}(n-b)+P_{A}(n), & b\leq n,
                      \end{array}
\end{cases}
$$
where $A=\{1, 2, 3, 4\}$. We know the polynomial expressions for $P_{A}(12n+i), i\in\{0,\ldots, 11\}$ and that $P_{B}(L_{b}n+j), j\in\{0,\ldots, L_{b}-1\}$, where $L_{b}=\op{LCM}(1,2,3,4,b)$, is a polynomial of degree 4 with rational coefficients. Using induction one can obtain the expression for the polynomials of interests. We omit tiresome details.
\end{proof}

\begin{thm}
Let $a\in\N_{\geq 3}, b\in\N_{\geq 4}$ and put $A=\{1, 2, a\}, B=\{1, 2, 3, 4, b\}$. If $a\equiv 1, 2, 5, 7, 11, 10\pmod*{12}$ and $b=4a$, then the Diophantine equation $P_{A}(x)=P_{B}(y)$ has infinitely many solutions in positive integers.
\end{thm}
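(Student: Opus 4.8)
The plan is to reduce the statement, in each of the six residue classes, to a single polynomial identity obtained by combining Theorem \ref{12aexpres} with Lemma \ref{specb}. Since $b=4a$, the six classes distribute among the four parts of Lemma \ref{specb}: $a\equiv 1,7\pmod{12}$ gives $a=6k+1$, $b=4(6k+1)$ (part (1)); $a\equiv 5,11\pmod{12}$ gives $a=6k+5$, $b=4(6k+5)$ (part (2)); $a\equiv 2\pmod{12}$ gives $a=12k+2$, $b=4(12k+2)$ (part (3)); and $a\equiv 10\pmod{12}$ gives $a=12k+10$, $b=4(12k+10)$ (part (4)). So in each case Lemma \ref{specb} supplies a residue $j$ and a factorisation $P_B(3bn+j)=\ell_1(n)\ell_2(n)Q(k,n)$ with $\ell_1,\ell_2$ linear and $Q$ quadratic in $n$. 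The classes not on the list (in particular those with $3\mid a$, and $a\equiv 4,8\pmod{12}$) are excluded precisely because Lemma \ref{specb} does not provide such a factorisation for the corresponding $b=4a$.

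The first step is to rewrite the factored form. Put $L(n)=\ell_1(n)\ell_2(n)$. Comparing coefficients with the explicit $Q_1,\dots,Q_4$ of Lemma \ref{specb}, one finds in every case that $Q(k,n)=a\,L(n)+d_a$ for an explicit integer $d_a$ (for instance $d_a=2k$ in part (1) and $d_a=-(4k+1)$ in part (3)), whence
\[
P_B(3bn+j)=a\,L(n)^2+d_a\,L(n).
\]
The essential feature is that the coefficient of $L^2$ equals $a$, which is exactly the leading coefficient of each polynomial $P_A(2am+i)$ produced by Theorem \ref{12aexpres}.

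The second step is to locate, in each case, a residue $i$ modulo $2a$ and an integer shift $f$ (a small negative integer, $f=-1$ in part (1)) so that $P_A\bigl(2a(m+f)+i\bigr)$, viewed as a polynomial in $m$, equals $am^2+d_am$. Writing $P_A(2am+i)=am^2+\beta_i m+\gamma_i$ with $\gamma_i=P_A(i)$ and $\beta_i$ the explicit linear coefficient of Theorem \ref{12aexpres}, this is the pair of scalar equations $2af+\beta_i=d_a$ and $af^2+\beta_i f+\gamma_i=0$. For $f=-1$ these read $\beta_i=d_a+2a$ and $\gamma_i=d_a+a$: the first equation fixes $i$ (in part (1) it forces $i=11k$, $\beta_i=14k+2$), and one then verifies, using the recurrence $P_A(m)=P_A(m-a)+\lfloor m/2\rfloor+1$ from the proof of Theorem \ref{12aexpres}, that $P_A(i)$ does equal $d_a+a$ (in part (1), $P_A(11k)=8k+1=2k+(6k+1)$). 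The analogous verification in parts (2)--(4) is a short direct computation.

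Combining the two steps, for the residue $i$ just found and with $L=L(n)=\ell_1(n)\ell_2(n)$ one gets the identity
\[
P_A\bigl(2a(L(n)-1)+i\bigr)=a\,L(n)^2+d_a\,L(n)=P_B(3bn+j).
\]
Since $L(n)\to\infty$, for every $n\in\N$ the integers $x=2a(L(n)-1)+i$ and $y=3bn+j$ are positive and satisfy $P_A(x)=P_B(y)$, and distinct $n$ give distinct $y$; hence there are infinitely many solutions. The delicate point is the second step: one must have the exact constant terms $\gamma_i=P_A(i)$ of Theorem \ref{12aexpres} available and check, for each of the four parts of Lemma \ref{specb}, that the residue $i$ forced by $\beta_i=d_a+2a$ indeed satisfies $P_A(i)=d_a+a$ (so that the shifted polynomial has vanishing constant term); the passage from this identity to infinitely many positive solutions is then immediate.
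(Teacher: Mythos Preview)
Your argument is correct and follows essentially the same route as the paper: in each of the four cases coming from Lemma~\ref{specb} you arrive at the very same residues $i,j$ and the same polynomial expression for $m$ in terms of $n$ (indeed $m'=L(n)-1$, with $f=-1$ in \emph{all} four cases, not only in part~(1)). The one genuine addition over the paper's proof is your structural observation $Q(k,n)=a\,L(n)+d_a$, which explains \emph{why} $P_A(2am+i)-P_B(3bn+j)$ has a monic linear factor in $m$: since $P_B=L\cdot Q=aL^2+d_aL$ and $P_A(2am+i)=am^2+\beta_i m+\gamma_i$, the difference becomes $a(m^2-L^2)+\beta_i m-d_aL+\gamma_i$, which factors once $\beta_i$ and $\gamma_i$ are matched. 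The paper simply records the factorisation $R_1R_2$ case by case without this explanation. So the two proofs coincide in substance, with yours giving a cleaner reason for the key reducibility step; the only thing to tighten is to state explicitly that the shift $f=-1$ and the verification $P_A(i)=d_a+a$ go through uniformly in all four parts (as they do).
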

\begin{proof}
Note that if $a\equiv 1, 2, 5, 7, 11, 10\pmod*{12}$, then $a$ can be written in one of the following form: $a=6k+1, a=6k+5, a=12k+2$ $a=12k+10$. Thus, in each case, the value of $b=4a$ is exactly the value of $b$ considered in Lemma \ref{specb}. Following the idea of proof of Theorem \ref{PA4} we present the values of $i, j$ such that the polynomial $P_{A}(2am+i)-P_{B}(3bn+j)$ is reducible and the coefficient in of the linear factor (in $m$) near $m$ is equal to 1.

Let $a=6k+1, b=4a, i=11k, j=3(8k-1)$. Then $P_{A}(2am+i)-P_{B}(3bn+j)=R_{1}(m,n)R_{2}(m,n)$, where
\begin{align*}
R_{1}(m,n)&=m-3(6k+5)n^2-2(9k+7)n-4k+1,\\
R_{2}(m,n)&=(6k+1)m+3(6k+1)^2n^2+2(6k+1)(9k+1)n+24k^2+12k+1.
\end{align*}
Thus, if $m=3(6k+5)n^2+2(9k+7)n+4k-1$ then $P_{A}(2am+i)=P_{B}(3bn+j)$ and our equation has infinitely many solutions.

Because in each case we proceed in the same way we present only the values of $a, i, b, j$ and the corresponding solution for $m$.

If $a=6k+5, i=7k+4, b=4(6k+4), j=24k+13$, then
$$
m=3(6k+5)n^2+2(9k+7)n+2(2k+1).
$$

If $a=2(6k+1), i=14k, b=8(6k+1), j=48k+1$, then
$$
 m=6(6k+1)n^2+(36k+5)n+8k.
$$

If $a=2(6k+5), i=2(11k+8), b=8(6k+5), j=3(16k+11)$, then
$$
m=6(6k+5)n^2+(36k+29)n+8k+5.
$$

\end{proof}

We proved that for many choices of sequences $A, B$, the corresponding Diophantine equation $P_{A}(x)=P_{B}(y)$ has infinitely many solutions in positive integers. However, in each case under consideration we had $\op{min}\{\#A,\#B\}\leq 3$. This observation lead us to the following.

\begin{ques}
Let $A, B\subset \N_{+}$. Let us suppose that the Diophantine equation $P_{A}(x)=P_{B}(y)$ has infinitely many (non-trivial) solutions in positive integers. How large the number $\op{min}\{\#A, \#B\}$ can be?
\end{ques}

Let us explain what a trivial solution means. More precisely, if for example $A=\{1, pa_{2},...,pa_{k}\}$ then if $P_{A}(pn)$ is a non-zero, then in each representation
$$
1\cdot x_{1}+\sum_{i=2}^{k}pa_{i}x_{i}=pn
$$
we need to have $p|x_{1}$ and thus we get a representation
$$
1\cdot y_{1}+\sum_{i=2}^{k}a_{i}x_{i}=n.
$$
It is clear that this mapping can be reversed. Thus, by taking $B=\{1, a_{2}, ..., a_{k}\}$ we have the boring identity $P_{A}(pn)=P_{B}(n)$.

Thus, in regards to question above, we considered equations of the form $P_A(x)=P_B(y),$ where $A,B$ are sets having 5 elements from $\{1,2,\ldots,10\}$ and one of the elements is 1. We searched for reducible polynomials $P_A(x)-P_B(y)$ having a linear or quadratic factor. We implemented a parallel algorithm and used SageMath on a machine having 16 cores. It took about 10 hours to determine the appropriate polynomials. There are 44982 such cases. Among these polynomials we looked for examples providing infinitely many integral solutions. To reduce the time of computation a timeout was set to be 60 seconds. There are 392 cases for which the 60 seconds were not sufficient to compute the result. There are 2338 quadratic equations that yield infinitely many integral solutions and 2100 linear equations that provide parametric solutions. However, even in the case of reducibility we sometimes get factors without positive integer solutions. We present several examples.

Let $A=\{1,2,4,5,6\}$ and $B=\{1,4,6,9,10\}.$ Here we obtain that $P_A(60m+22)-P_B(180n+111)$ is,
up to a constant factor, equal to $f_{1}(m,n)f_{2}(m,n)$, where
\begin{eqnarray*}
f_{1}(m,n)&=&150m^2 + 450n^2 + 155m + 630n + 259,\\
f_{2}(m,n)&=&30m^2 - 90n^2 + 31m - 126n - 36.
\end{eqnarray*}
The equation $f_{1}(m,n)=0$ has no solution modulo 5. The equation $f_{2}(m,n)=0$ has infinitely many integral solutions. However, all are negative and are not of interest for us.

As a second example consider $A=\{1,2,4,6,10\}$ and $B=\{1,2,5,6,8\}.$ We get that $P_A(60m+17)-P_B(120n+17)$ is, up to a constant factor, equal to $g_{1}(m,n)g_{2}(m,n)g_{3}(m,n)$, where
\begin{eqnarray*}
g_{1}(m,n)&=&m-2n\\
g_{2}(m,n)&=&15m + 30n + 14,\\
g_{3}(m,n)&=&75m^2 + 300n^2 + 70m + 140n + 31.
\end{eqnarray*}
We obtain infinitely many integral solutions from the equation $g_{1}(m,n)=0$ (however, these are trivial solutions). The other two equations have no solutions modulo 5.

As a third example let $A=\{1,2,3,4,6\},B=\{1,2,4,5,10\}.$ It follows that $P_A(12m+1)-P_B(20n+1)=1/6h_{1}(m,n)h_{2}(m,n)$, where
\begin{eqnarray*}
h_{1}(m,n)&=&6m^2 + 10n^2 + 9m + 12n + 5\\
h_{2}(m,n)&=&6m^2 - 10n^2+ 9m - 12n.
\end{eqnarray*}
The equation $h_{1}(m,n)=0$ can be written as
$$
15(36m+27)^2+(180n+108)^2=6399,
$$
and it follows that the only integral solution is given by $(m,n)=(-1,-1).$ The equation $h_{2}(m,n)=0$ has infinitely many positive integral solutions, the two smallest being $(m,n)=(2928,2268), (11252256,8715960)$.

For given $A\in\N_{+}$ the function $P_{A}(n)$ has a dual nature: from one side it is a quasi-polynomial and hance an algebraic object. On the other side $P_{A}(n)$ is counting function and thus live in a realm of combinatorics. In this paper we mainly operated on the former side. Thus, it is natural to state the following general question.

\begin{prob}
Let $A, B\subset\N_{+}$. Does there exist combinatorial conditions on $A$ and $B$ which guarantees non-existence (or finiteness) of integral solutions of the Diophantine equation $P_{A}(x)=P_{B}(y)$?
\end{prob}

It is clear that the above problem can be stated in a grater generality. More precisely, we can ask whether there are some combinatorial conditions which guarantee that for not necessarily finite sets $A_{1}, A_{2}$, and corresponding properties $\cal{W}_{1}, \cal{W}_{2}$, the equation $p_{A_{1}}(\cal{W}_{1},x)=p_{A_{2}}(\cal{W}_{2},y)$ has only finitely many solutions in positive integers.

As we mentioned above, the partition functions count combinatorial objects. Thus, equality between different partition functions at certain integer arguments is equivalent with the statement that certain finite sets have the same number of elements. This suggest the following

\begin{prob}
Let $A, B\subset\N_{+}$ and suppose that the Diophantine equation $p_{A}(x)=p_{B}(y)$ has infinitely many solutions in integers. Moreover, let $x=\phi(n), y=\psi(n)$ be parametrization of one (of possibly many) infinite part of the solution set, i.e., $p_{A}(\phi(n))=p_{B}(\psi(n))$ for each $n\in\N_{+}$. Describe the bijection (in combinatorial or other way) between the sets $\op{Part}(\phi(n))=\op{Part}(\psi(n))$.
\end{prob}

Motivated by our findings presented in Theorem \ref{P3P5} and related results we formulate the following
\begin{conj}
Let $m, n\in\N_{+}$. If $(m,n)\neq (3, 4)$ and $3\leq m<n$, then the Diophantine equation $P_{m}(x)=P_{n}(y)$ has only finitely many solutions in non-negative integers.
\end{conj}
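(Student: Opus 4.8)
The plan is to reduce the equation to finitely many polynomial equations with separated variables and then to apply the finiteness criterion of Bilu and Tichy together with Siegel's theorem on integral points of curves. Write $L_{m}=\op{lcm}(1,\dots,m)$ and $L_{n}=\op{lcm}(1,\dots,n)$. Since $P_{m}$ and $P_{n}$ are quasi-polynomials, for each pair $(i,j)$ with $i\in\{0,\dots,L_{m}-1\}$, $j\in\{0,\dots,L_{n}-1\}$ the maps $k\mapsto P_{m}(L_{m}k+i)$ and $\ell\mapsto P_{n}(L_{n}\ell+j)$ are polynomials $p_{i}\in\Q[k]$, $q_{j}\in\Q[\ell]$ of degrees $m-1$ and $n-1$ with positive leading coefficients; clearing denominators gives $F_{i},G_{j}\in\Z[t]$ with $F_{i}(k)=G_{j}(\ell)\Longleftrightarrow P_{m}(L_{m}k+i)=P_{n}(L_{n}\ell+j)$. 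Hence $P_{m}(x)=P_{n}(y)$ has infinitely many solutions in non-negative integers if and only if at least one of the finitely many equations $F_{i}(t)=G_{j}(s)$ has infinitely many integral solutions, and the task is to rule this out case by case.

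The decisive tool is the Bilu--Tichy theorem on separated-variable equations: $F_{i}(t)=G_{j}(s)$ has infinitely many integral solutions only if $F_{i}=\phi\circ F^{*}\circ\lambda$ and $G_{j}=\phi\circ G^{*}\circ\mu$ for some linear $\lambda,\mu\in\Q[t]$, some $\phi\in\Q[t]$, and some standard pair $(F^{*},G^{*})$ (in one of the two orders). Counting degrees, $\deg\phi$ divides $\gcd(m-1,n-1)$ and $\deg F^{*}=(m-1)/\deg\phi<(n-1)/\deg\phi=\deg G^{*}$, so the pair is of one of the rigid types in the classification: pure powers $t^{r}$, Dickson polynomials $D_{r}(t,a)$, a monomial times a power $t^{r}g(t)^{e}$, a quadratic times a power $(at^{2}+b)g(t)^{e}$, or the sporadic pair built from $(at^{2}-1)^{3}$ and $3t^{4}-4t^{3}$. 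Each possibility forces $F_{i}$ and $G_{j}$ into a very special shape linked through the common component $\phi$.

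One then has to show that, for $(m,n)\neq(3,4)$ and every admissible $(i,j)$, none of these shapes is realised simultaneously by $P_{m}(L_{m}k+i)$ and $P_{n}(L_{n}\ell+j)$, whereas when $\{m,n\}=\{3,4\}$ exactly the (infinitely many, polynomially parametrised) solutions recorded in the table in the proof of Theorem \ref{P3P4} occur. For $m\geq 4$ this needs structural facts such as: $P_{m}(L_{m}k+i)$ is not, after an affine change of variable, a perfect power of a lower-degree polynomial, is not an affine image of a Dickson polynomial, and is, as regards its functional decompositions, as rigid as the classification allows. For the subfamily $m=3$, $n\geq 5$ one can argue concretely, exactly as in Theorems \ref{P3P4} and \ref{P3P5}: completing the square turns $p_{i}(k)=q_{j}(\ell)$ into $Y^{2}=h(\ell)$ with $\deg h=n-1\geq 4$, a curve of genus $\geq 1$ once $h$ is checked to have at least three simple roots, so that only finitely many integral points survive by Siegel's theorem. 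The pair $(3,4)$ is genuinely exceptional because $P_{3}(6k+3)=3(k+1)^{2}$, combined with several pieces of $P_{4}$, produces singular genus-zero curves that do admit polynomial parametrisations.

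The main obstacle is exactly the previous step carried out uniformly in $m$ and $n$: controlling the functional decompositions and the relevant invariants (the subleading coefficients, the discriminant, the ramification data) of the pieces $P_{m}(L_{m}k+i)$ for all $m$ at once. Already $m\in\{3,4,5\}$ demands a lengthy case analysis and the explicit resolution of many elliptic and genus-one quartic equations, and no closed formula is available that makes the required rigidity statements transparent in general. For any fixed pair $(m,n)$ the scheme above is, in principle, effective (and often effective in practice, via Baker's method in the curve cases); the uniform statement, on the other hand, seems to require genuinely new information about the arithmetic of partition polynomials, which is why it is only stated here as a conjecture.
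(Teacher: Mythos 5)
The statement you are addressing is stated in the paper only as a conjecture; the authors give no proof, and your submission does not supply one either. What you have written is a sensible programme --- reduce $P_{m}(x)=P_{n}(y)$ to the finitely many separated-variable equations $P_{m}(L_{m}k+i)=P_{n}(L_{n}\ell+j)$, invoke the Bilu--Tichy classification of standard pairs, and handle the residual cases by Siegel's theorem --- and the reduction itself is correct: $P_{m}(L_{m}k+i)$ is indeed a polynomial of degree $m-1$ in $k$ with positive leading coefficient, and the degree bookkeeping ($\deg\phi\mid\gcd(m-1,n-1)$, $\deg F^{*}<\deg G^{*}$) is right. But the entire content of the conjecture is concentrated in the step you explicitly leave open: showing, uniformly in $m,n$ and in the residues $i,j$, that no piece $P_{m}(L_{m}k+i)$ is an affine image of a power, a Dickson polynomial, or one of the other standard-pair shapes composed with a common $\phi$. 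No argument is offered for this, and it is not a routine verification --- already the paper's own Theorem \ref{P3P4} shows that for $(m,n)=(3,4)$ several of the reduced equations \emph{do} degenerate (the cubic acquires a double root) and produce polynomial families of solutions, so the needed rigidity genuinely fails in at least one case and must be excluded by a structural property of the partition quasi-polynomials that nobody currently knows how to establish.

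Two further points where the sketch is thinner than it looks. First, for the subfamily $m=3$, $n\geq 5$, finiteness of integral points on $Y^{2}=h(\ell)$ with $\deg h=n-1$ requires $h$ to have at least three simple roots; you flag this but do not verify it, and the $(3,4)$ case shows that multiple roots of these discriminant polynomials do occur in nature, so this is not a formality --- the paper's Theorem \ref{P3P5} needs $360$ explicit quartic computations (several of which resist the standard routines) just to settle $n=5$. Second, Bilu--Tichy and Siegel are both ineffective only in the sense of not being uniform here: for each fixed $(m,n)$ your scheme plausibly terminates, but the conjecture quantifies over all pairs, and nothing in your write-up controls the decompositions of $P_{n}(L_{n}\ell+j)$ as $n\to\infty$. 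In short: the approach is the natural one and is consistent with how the authors treat the cases they can do, but as a proof it has a gap exactly where the conjecture is hard, and you are right to present it as a strategy rather than a demonstration.
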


\begin{rem}
{\rm Let us note that from the recurrence relation satisfied by the sequence $\{P_{m}(k)\}_{k\in\N}$, i.e.,
$$
P_{m}(k)=P_{m-1}(k), k<m, \quad P_{m}(k)=P_{m-1}(k)+P_{m}(k-m),\;k\geq m,
$$
we know that the equation $P_{m}(x)=P_{n}(y)$ has trivial solutions $x=y=i, i\leq m$. So, it is reasonable to consider the set
$$
C_{m,n}:=\{(x, y)\in\Z\times\Z:\;P_{m}(x)=P_{n}(y)\wedge y\geq n\}.
$$
We believe that much stronger property is true, i.e.,
$$
\bigcup_{\min\{m, n\}\geq 3, m<n, (m,n)\neq (3,4)}C_{m,n}(\N)<+\infty.
$$

}
\end{rem}

\begin{acknowledgement}
The authors are grateful to Nikolaos Tzanakis for his ideas to complete the proof of Theorem \ref{P3P5}. We are also grateful for an anonymous referee for remarks which led to  improving the presentation.
\end{acknowledgement}

 \newcommand{\noop}[1]{} \def\cprime{$'$}


 \end{document}